\theoremstyle{plain}
\newtheorem{thm}{Theorem}
\title{Isogenies over quadratic fields of elliptic curves with rational $j$-invariant}
\author{Borna Vukorepa}
\address{Department of Mathematics, University of Zagreb, Bijeni\v{c}ka cesta 30, 10000 Zagreb, Croatia}
\email{bovukor@math.hr}
\urladdr{https://web.math.pmf.unizg.hr/~bovukor/}
\begin{document}
\begin{abstract}
We determine the possible degrees of cyclic isogenies defined over quadratic fields for non-CM elliptic curves with rational $j$-invariant.
\end{abstract}
\maketitle
\section{Introduction}
For a number field $K$, it is well known that elliptic curves $E/K$ with a cyclic $K$-rational $n$-isogeny are parametrized (up to an isomorphism over $\overline{K}$) by the $K$-rational points on the modular curve $X_0(n)$. The curve $X_0(n)$ is, by definition, the completion of the curve $Y_0(n)$ isomorphic to $\Gamma_0(n)/\mathbb{H}$. Recall that:
\[
\mathbb{H} := \{x + yi : x,y \in \mathbb{R}, y > 0\}, \quad \Gamma_0(n) := \Bigg\{\begin{pmatrix}
a & b \\
c & d
\end{pmatrix}\ \in \SL_2(\mathbb{Z}) : c \equiv 0 \pmod{n}\Bigg\}
\]
All the $\mathbb{Q}$-rational points on $X_0(n)$ are known for all $n$, by the results of Mazur \cite{mazur77} and Kenku \cite{kenku1981} (see also references therein). With that information, we know that if $E/\mathbb{Q}$ is a non-CM elliptic curve with a cyclic $n$-isogeny defined over $\mathbb{Q}$, then $n \leq 18$ with $n \neq 14$ or $n \in \{21, 25, 37\}$. Recently, there has also been some progress in understanding the quadratic points on $X_0(n)$. Momose \cite[Theorem B]{momose} proved that for any fixed quadratic field $K$ which is not imagainary of class number $1$, $X_0(p)(K)$ has non-cuspidal points for only finitely many $p$. Bruin and Najman \cite{bnQcurve} described all quadratic points on the hyperelliptic curves $X_0(n)$ such that $J_0(n)(\Q)$ is finite (all hyperelliptic $X_0(n)$ except $X_0(37)$). Özman and Siksek \cite{OzmanSiksek19} described all quadratic points on the non-hyperelliptic modular curves $X_0(n)$ with $g(X_0(n))\leq 5$ and $\rk(J_0(n)(\Q))=0$, of which there are 15. After that, Box \cite{Box19} completed the description of quadratic points on $X_0(n)$ of genus $2\leq g(X_0(n)) \leq 5$ by describing the quadratic points for the 8 values of $n$ such that $\rk(J_0(n)(\Q))>0$, including the hyperelliptic case of $n=37$. Recently, Najman and the author of this paper \cite{NajmanVuk} have described all the quadratic points on bielliptic modular curves $X_0(n)$ for the values of $n$ not covered by the aforementioned papers.

All that information tells us something about the elliptic curves and isogenies defined over quadratic fields. It is therefore reasonable to ask ourselves a slightly different question: if $E$ has a rational $j$-invariant, what are the possible degrees of a cyclic isogeny of $E$ defined over a quadratic number field? This question is closely connected to the question of possible images of the mod $n$ Galois representation for an elliptic curve $E$ defined over $\mathbb{Q}$. There has been a lot of progress in that direction, one can, for example, see \cite{2AdicZB, reprZywina, MorrowSim, troglavaNeman, balDogra} and the references therein. It was proved by Najman in \cite{primeNajman} that if $E$ is a non-CM elliptic curve with a rational $j$-invariant and with an isogeny of prime degree $p$, then $p \in \{2,3,5,7,11,13,17,37\}$ as long as the degree of the field of definition of the isogeny is at most $7$. Motivated by that work, we will investigate the same question as investigated for quadratic number fields and for all possible cyclic isogeny degrees. Our main result is the following:
\begin{thm}\label{mainres}
Let $E$ be a non-CM elliptic curve with $j(E) \in \Q$. Assume $E$ has a cyclic $n$-isogeny defined over a quadratic extension of \space $\mathbb{Q}$. Then $n \leq 18$ with $n \neq 14$ or $n \in \{20, 21, 24, 25, 32, 36, 37\}$.
\end{thm}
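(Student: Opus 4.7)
The plan is to reformulate the problem in terms of quadratic points on the modular curves $X_0(n)$ and to apply the classifications cited in the introduction. Since $j(E) \in \mathbb{Q}$ we may replace $E$ by a $\mathbb{Q}$-rational twist and assume $E/\mathbb{Q}$; a cyclic $n$-isogeny with kernel $C \leq E[n]$ defined over a quadratic field $K$ then corresponds to a $K$-point on $X_0(n)$ whose image in $X(1)$ is rational. Writing $\mathrm{Gal}(K/\mathbb{Q}) = \langle \sigma \rangle$, either $C = C^\sigma$ (the isogeny is already $\mathbb{Q}$-rational, so $n$ lies in the Mazur--Kenku list) or $C \neq C^\sigma$; in the second case both $C \cap C^\sigma$ and $C + C^\sigma$ are $\mathbb{Q}$-rational subgroups of $E[n]$, and in particular the cyclic subgroup $C \cap C^\sigma$ gives $E$ a $\mathbb{Q}$-rational isogeny whose degree must again lie in the Mazur--Kenku list. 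Decomposing $n = \prod p^{a_p}$, the $p$-primary part $C_p$ of $C$ is itself a cyclic $p^{a_p}$-isogeny defined over $K$, so the problem splits into prime-power analyses plus a coexistence check; since the prime-power parts are coprime, simultaneous existence of all the $p$-power isogenies over $K$ automatically yields a cyclic $n$-isogeny over $K$.

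For each prime $p$, I would determine the maximal exponent $a_p$ such that $E/\mathbb{Q}$ can carry a cyclic $p^{a_p}$-isogeny over some quadratic field, and which rational $j$-invariants realize this. The classifications cited in the introduction cover most of the relevant $X_0(p^a)$: hyperelliptic (Bruin--Najman), low-genus non-hyperelliptic (\"{O}zman--Siksek, Box), and bielliptic (Najman--Vukorepa). For the residual higher-genus $X_0(n)$ that are neither hyperelliptic nor bielliptic, Harris--Silverman ensures only finitely many quadratic points, and Momose's theorem bounds the primes for which $X_0(p)$ has non-cuspidal quadratic points over each fixed $K$; extending this uniformly as $K$ varies requires some additional care. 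Intersecting with the rational $j$-invariant condition (by pulling back the rational $j$-line under the forgetful map $X_0(n) \to X(1)$) yields the allowed prime-power exponents.

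With the prime-power list in hand, the possible $n$ are products of compatible prime powers, vetted by requiring that a single elliptic curve with rational $j$-invariant simultaneously admits all the prime-power isogenies over the same quadratic $K$. This coexistence amounts to a rational point on a suitable fiber product of modular curves above $X(1)$, or equivalently to a quadratic point of $X_0(n)$ above a rational $j$-value. The new values $n \in \{20, 24, 32, 36\}$ would be included by exhibiting an explicit elliptic curve with rational $j$-invariant admitting a cyclic $n$-isogeny over a specific quadratic field $K$; inclusion of every other listed $n$ is clear from Mazur--Kenku. Conversely, every $n$ outside the list must be excluded by showing that no quadratic point of $X_0(n)$ has rational $j$-invariant.

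The main obstacle will be the analysis of hyperelliptic and bielliptic $X_0(n)$ with infinitely many quadratic points, where the classifications present these points as parametric families (covers of $\mathbb{P}^1$ or of elliptic curves of positive rank), and one must check for each family whether any parameter value lifts to a rational $j$-invariant; this reduces to polynomial equations in the $j$-line but can be delicate. A secondary non-trivial step is confirming coexistence of the prime-power isogenies for the new composite values $n = 20, 24, 36$, which may require explicit computations on $X_0(n)$ above the rational $j$-line, guided by the mod $n$ Galois image results referenced in the introduction.
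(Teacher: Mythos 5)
Your outline captures the correct first move (descend to $E/\mathbb{Q}$ by a quadratic twist, since the existence of a cyclic $n$-isogeny is twist-invariant) and the correct overall shape (a prime-power analysis followed by a coexistence check via quadratic points on $X_0(n)$ above rational $j$), but two of its load-bearing steps do not go through as described. First, nothing in your toolkit actually bounds the set of primes dividing $n$. Momose's theorem is a statement for a \emph{fixed} quadratic field $K$ (and excludes the imaginary quadratic fields of class number one), so it cannot be applied uniformly as $K$ varies; the ``additional care'' you defer is precisely Najman's theorem that a non-CM curve with rational $j$-invariant and a $p$-isogeny defined over a field of degree at most $7$ forces $p\in\{2,3,5,7,11,13,17,37\}$, and this is the indispensable starting point of the paper's argument. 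Second, your plan to settle the prime-power exponents by classifying quadratic points on $X_0(p^a)$ via the hyperelliptic, genus-$\le 5$, and bielliptic classifications together with Harris--Silverman breaks down already at $X_0(49)$: that curve has genus $1$, so none of those classifications apply and finiteness of quadratic points fails (it acquires positive rank over suitable quadratic fields). The paper avoids quadratic points on $X_0(p^2)$ entirely and argues through Galois images instead: Zywina's mod-$p$ classification; the fact that a rational $p$-isogeny forces the $p$-adic image to contain a Sylow pro-$p$ subgroup, so the representation is defined modulo $p$ and the kernel of any cyclic $p^2$-isogeny has field of definition of degree at least $p$; an explicit enumeration of the subgroups of $\mathrm{GL}_2(\mathbb{Z}/49\mathbb{Z})$ lying over $N_s(7)$; and Momose's theorem that $X_0^{+}(7^r)(\mathbb{Q})$ consists of cusps and CM points, applied via $X_{s}(49)\cong X_0^{+}(7^4)$. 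Analogous $\ell$-adic inputs (Greenberg at $5$, Rouse--Zureick-Brown at $2$, the $3$-adic classification at $3$) are what bound the exponents of $2$, $3$, $5$; these are likewise out of reach of a pure quadratic-points argument.

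On the coexistence step you correctly reduce to quadratic points of $X_0(n)$ with rational image in $X(1)$, and for most composite $n$ the cited tables do suffice, provided you make explicit the argument the paper uses repeatedly: a non-exceptional quadratic point is sent by the relevant Atkin--Lehner (or hyperelliptic) involution to its Galois conjugate, so a rational $j$-invariant would make $E$ isogenous to itself and hence CM. But you underestimate what remains after the tables: $X_0(91)$ has genus $7$, is neither hyperelliptic nor bielliptic nor of genus at most $5$, and is covered by no prior classification, so the paper must carry out a fresh relative symmetric Chabauty computation there; eliminating $n=14$ likewise requires intersecting the $N_s(7)$ $j$-invariant family with the $2$-isogeny family and finding all rational points on the resulting genus-$3$ curve. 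Finally, your observation that $C\cap C^{\sigma}$ and $C+C^{\sigma}$ are $\mathbb{Q}$-rational is true but yields nothing when $C\cap C^{\sigma}$ is small, and it is not used anywhere else in your argument.
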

The only new degrees not already arising for rational isogenies are $20, 24, 32, 36$ for which we can respectively find examples by simply using LMFDB: \href{http://www.lmfdb.org/EllipticCurve/2.0.8.1/2178.5/c/7}{2178.5-c7}, \href{http://www.lmfdb.org/EllipticCurve/2.2.40.1/90.1/f/3}{90.1-f3}, \href{http://www.lmfdb.org/EllipticCurve/2.2.5.1/45.1/a/3}{45.1-a3}, \href{http://www.lmfdb.org/EllipticCurve/2.0.7.1/28.2/a/11}{28.2-a11}. Performing these steps throughout the paper gives us the proof of Theorem \ref{mainres}:
\begin{enumerate}
    \item \hyperref[diffPrimes]{Show} that if $p < q$ are prime divisors of $n$, then $q \leq 5$ or $(p, q) \in \{(2, 7), (3, 7), (7 ,13)\}$.
    \item \hyperref[sqPrime]{Show} that if $p^2 \mid n$, then $p \in \{2,3,5\}$.
    \item \hyperref[5isogenies]{Show} that if $5^k \mid n$ or $3^k \mid n$, then $k \leq 2$.
    \item \hyperref[2pow]{Show} that if $2^k \mid n$, then $k \leq 5$.
    \item \hyperref[2and3]{Show} that if $n = 2^a3^b$, then $n \in \{2,4,6,8,9,12,16,18,24,32,36\}$.
    \item \hyperref[2and5]{Show} that if $n = 2^a5^b$, then $n \in \{2,4,5,8,10,16,20,25,32\}$.
    \item \hyperref[3and5]{Show} that if $n = 3^a5^b$, then $n \in \{3,5,9,15,25\}$.
    \item \hyperref[143063]{Show} that $n \in \{14,30,63\}$ is impossible.
    \item \hyperref[91iso]{Show} that $n = 91$ is impossible.
\end{enumerate}
The code that verifies all the computation in this paper can be found on:
\begin{center}
    \url{https://github.com/brutalni-vux/IsogeniesQuadratic}.
\end{center}

\section{Notation and auxiliary results}
Throughout this paper, $E/\mathbb{Q}$ will be a non-CM elliptic curve defined over $\mathbb{Q}$. For a number field $K$, denote with $G_K$ the absolute Galois group of $G$, meaning that $G_K = \Gal(\overline{K}/K)$. Notice that it is enough to consider elliptic curves defined over $\mathbb{Q}$. Presence of a cyclic $n$-isogeny is invariant under quadratic twisting. Hence, for $E/K$ with a $K$-rational cyclic $n$-isogeny we can descend from $E/K$ to $E'/\Q$ via a quadratic twist and isomorphism over $K$ with $E'$ having a $K$-rational cyclic $n$-isogeny. For a point $P \in E$, denote with $\langle P \rangle$ a cyclic subgroup of $E$ generated by $P$. For a finite cyclic subgroup $C$ of $E$, let $\mathbb{Q}(C)$ be the smallest field such that $G_{\mathbb{Q}(C)}$ acts on $C$. Notice that this is also the field of definition of an isogeny with kernel $C$. For a positive integer $n$, denote with $\rho_{E, n}: G_{\mathbb{Q}} \mapsto \GL_2(\mathbb{Z}/n\mathbb{Z})$ the mod $n$ Galois representation of $E$. In the remaining part of this section, we list several Lemmas and definitions which we will use frequently.

\begin{lemma}[{\cite[Proposition 3.1.]{primeNajman}}] \label{surjective}
Let $E/\mathbb{Q}$ be an elliptic curve and $p$ a prime such that $\rho_{E,p}$ is surjective, and $C$ a subgroup of E[p] of order p. Then $[\mathbb{Q}(C) : \mathbb{Q}] = p + 1$.
\end{lemma}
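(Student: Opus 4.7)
The plan is to reduce this to an orbit-stabilizer computation on $\mathbb{P}^1(\mathbb{F}_p)$. Viewing $E[p]$ as a two-dimensional $\mathbb{F}_p$-vector space, its subgroups of order $p$ are exactly the one-dimensional subspaces, which are in bijection with the points of $\mathbb{P}^1(\mathbb{F}_p)$; in particular, there are $p+1$ such subgroups. Let $\mathcal{S}$ denote this set.

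First I would observe that $G_{\mathbb{Q}}$ acts on $\mathcal{S}$ through $\rho_{E,p}$, compatibly with the natural action of $\GL_2(\mathbb{Z}/p\mathbb{Z})$ on lines. By definition, $\mathbb{Q}(C)$ is the fixed field of the setwise stabilizer $\mathrm{Stab}_{G_{\mathbb{Q}}}(C) \subseteq G_{\mathbb{Q}}$, so Galois theory combined with orbit-stabilizer yields
\[
[\mathbb{Q}(C):\mathbb{Q}] \;=\; [G_{\mathbb{Q}} : \mathrm{Stab}_{G_{\mathbb{Q}}}(C)] \;=\; \#\bigl(G_{\mathbb{Q}} \cdot C\bigr).
\]

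Next I would invoke the hypothesis: since $\rho_{E,p}$ is surjective, the induced $G_{\mathbb{Q}}$-action on $\mathcal{S}$ coincides with the full action of $\GL_2(\mathbb{F}_p)$ on $\mathbb{P}^1(\mathbb{F}_p)$. This action is transitive (any two lines are related by some element of $\GL_2(\mathbb{F}_p)$), so the orbit of $C$ is all of $\mathcal{S}$ and has size $p+1$. Combining gives $[\mathbb{Q}(C):\mathbb{Q}] = p+1$.

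There is not really a main obstacle here; the argument is a one-step application of orbit-stabilizer once the ambient $G_{\mathbb{Q}}$-set is identified correctly. The only subtlety worth flagging is that $\mathrm{Stab}_{G_{\mathbb{Q}}}(C)$ fixes $C$ as a set and not pointwise, which on the $\GL_2(\mathbb{F}_p)$ side corresponds to the Borel subgroup stabilizing a line (upper-triangular matrices in a suitable basis), an index-$(p+1)$ subgroup, confirming the count from the other direction.
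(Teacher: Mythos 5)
Your proof is correct and is essentially the argument given in the cited source (Najman's Proposition 3.1): identify the order-$p$ subgroups of $E[p]$ with $\mathbb{P}^1(\mathbb{F}_p)$, note that surjectivity of $\rho_{E,p}$ makes the $G_{\mathbb{Q}}$-action transitive on these $p+1$ lines, and conclude via orbit-stabilizer that $[\mathbb{Q}(C):\mathbb{Q}]$ equals the index of the Borel stabilizer, namely $p+1$. Nothing is missing.
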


\begin{lemma}[{\cite[Lemma 3.2.]{primeNajman}}]
\label{interdeg}
Let $E/K$ be an elliptic curve over a number field and $P \in E[p]$. Let $C = \langle P \rangle$. Then
$[K(P) : K(C)]$ divides $p - 1$.
\end{lemma}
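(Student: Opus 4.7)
The plan is to prove this by constructing the natural homomorphism from $\Gal(K(P)/K(C))$ (or really its absolute Galois group quotient) into the unit group $(\mathbb{Z}/p\mathbb{Z})^\ast$ coming from the action on the cyclic subgroup $C$.

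First I would observe that by definition of $K(C)$, every $\sigma \in G_{K(C)}$ preserves $C$ as a set. Since $C = \langle P \rangle$ has order $p$, the element $\sigma(P)$ must lie in $C$, so there is a unique $a_\sigma \in (\mathbb{Z}/p\mathbb{Z})^\ast$ with $\sigma(P) = a_\sigma P$. (Note $a_\sigma \neq 0$ because $\sigma$ is an automorphism so sends $P$ to a generator.) This assignment $\sigma \mapsto a_\sigma$ is clearly a group homomorphism $\chi : G_{K(C)} \to (\mathbb{Z}/p\mathbb{Z})^\ast$, since the action on $C$ is by group automorphisms.

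Next I would identify the kernel: $\sigma \in \ker \chi$ precisely means $\sigma(P) = P$, which is the definition of $\sigma \in G_{K(P)}$. Since $P \in C$, we automatically have $K(C) \subseteq K(P)$, so $G_{K(P)}$ is an open subgroup of $G_{K(C)}$. The first isomorphism theorem then gives an injection
\[
G_{K(C)}/G_{K(P)} \hookrightarrow (\mathbb{Z}/p\mathbb{Z})^\ast.
\]
Taking indices (equivalently, passing to the finite Galois closure or using the fact that $[K(P):K(C)]$ equals the index $[G_{K(C)} : G_{K(P)}]$) yields that $[K(P):K(C)]$ divides $|(\mathbb{Z}/p\mathbb{Z})^\ast| = p-1$, which is the claim.

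There is no real obstacle here; the only small subtlety is verifying that $K(C) \subseteq K(P)$ so that the index $[K(P):K(C)]$ makes sense, but this is immediate because $G_{K(P)}$ fixes $P$ and hence fixes the subgroup $\langle P \rangle = C$, which means $G_{K(P)} \subseteq G_{K(C)}$ by the defining property of $K(C)$.
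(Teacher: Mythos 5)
Your argument is correct and is essentially the standard proof of this lemma (which the paper only cites from \cite[Lemma 3.2]{primeNajman} rather than reproving): the group $G_{K(C)}$ acts on the cyclic group $C$ of order $p$, giving a character into $\Aut(C)\cong(\mathbb{Z}/p\mathbb{Z})^{\ast}$ whose kernel is exactly $G_{K(P)}$, so $[K(P):K(C)]=[G_{K(C)}:G_{K(P)}]$ divides $p-1$. Your handling of the inclusion $K(C)\subseteq K(P)$ and of the identification of the kernel is exactly right, so there is nothing to add.
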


\begin{lemma}[{\cite[Proposition 3.3.]{primeNajman}}] \label{nonsplit}
Let $E/\mathbb{Q}$ be an elliptic curve and $p$ a prime such that the image of $\rho_{E,p}$ is
contained in the normalizer of the non-split Cartan subgroup and let $\langle P \rangle = C \subseteq E[p]$ a cyclic
subgroup of order $p$. Then:
\begin{itemize}
    \item If $p \equiv 1 \pmod{3}$, then $[\mathbb{Q}(C) : \mathbb{Q}] \geq p + 1$.
    \item If $p \equiv 2 \pmod {3}$, then $[\mathbb{Q}(C) : \mathbb{Q}] \geq (p + 1)/3$.
\end{itemize}
\end{lemma}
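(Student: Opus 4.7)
The plan is to translate $[\mathbb{Q}(C):\mathbb{Q}]$ into the size of a Galois orbit on $\mathbb{P}^1(\mathbb{F}_p) = \mathbb{P}(E[p])$ and then analyze this orbit via the dihedral structure of $N_{ns}/Z$, where $Z$ denotes the scalar matrices in $\GL_2(\mathbb{F}_p)$.

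First I would set up the group-theoretic framework. Using the standard identification $C_{ns} \cong \mathbb{F}_{p^2}^*$ (acting on $E[p] \cong \mathbb{F}_{p^2}$ by multiplication), the quotient $C_{ns}/Z \cong \mathbb{F}_{p^2}^*/\mathbb{F}_p^*$ is cyclic of order $p+1$ and acts regularly on the $p+1$ lines in $\mathbb{F}_p^2$. Conjugation by the Frobenius involution on $\mathbb{F}_{p^2}$ sends $\alpha$ to $\alpha^p = \alpha^{-1}$ in $C_{ns}/Z$, so $N_{ns}/Z$ is the dihedral group $D_{p+1}$ of order $2(p+1)$ acting on $\mathbb{P}^1(\mathbb{F}_p)$ in the standard way, with every point stabilizer of order $2$. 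Writing $\bar G$ for the image in $N_{ns}/Z$ of $G = \rho_{E,p}(G_{\mathbb{Q}})$, we obtain
\[
[\mathbb{Q}(C):\mathbb{Q}] \;=\; |\bar G \cdot C| \;=\; \frac{|\bar G|}{|\mathrm{Stab}_{\bar G}(C)|} \;\geq\; \frac{|\bar G|}{2}.
\]

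Next I would show that $\bar G$ is a \emph{dihedral} subgroup $D_k$ of $D_{p+1}$ for some divisor $k \mid p+1$. This uses that complex conjugation $c \in G$ is an involution with $\det(c) = -1$ and $\mathrm{tr}(c) = 0$, so its characteristic polynomial $X^2 - 1$ is reducible over $\mathbb{F}_p$; this is incompatible with $c$ lying in $C_{ns}$, whose non-scalar elements have irreducible characteristic polynomials. Hence $\bar G \not\subseteq C_{ns}/Z$, so $\bar G$ is dihedral, and the $\bar G$-orbit of $C$ has size $k$ or $2k$; in particular $[\mathbb{Q}(C):\mathbb{Q}] \geq k$.

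The main obstacle is bounding $k$ from below via the surjectivity of $\det \colon G \to \mathbb{F}_p^*$ coming from the mod-$p$ cyclotomic character. Writing $G_0 = G \cap C_{ns}$, we have $G = G_0 \sqcup cG_0$, and the equality $\det(G) = \det(G_0) \cup (-\det(G_0)) = \mathbb{F}_p^*$ forces $\det(G_0)$ to have index at most~$2$ in $\mathbb{F}_p^*$. Since $G_0$ is cyclic inside $C_{ns} \cong \mathbb{F}_{p^2}^*$ with $\det$ restricting to the norm $\alpha \mapsto \alpha^{p+1}$, this imposes numerical constraints on $|G_0|$, and hence on the index $(p+1)/k = [C_{ns}/Z : \bar G \cap (C_{ns}/Z)]$. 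A careful arithmetic analysis, using $\gcd(p-1, p+1) \leq 2$ together with the divisibility structure of cyclic subgroups of $\mathbb{F}_{p^2}^*$, shows that this index must divide~$3$. Since $3 \mid p+1$ exactly when $p \equiv 2 \pmod 3$, this yields the dichotomy: if $p \equiv 1 \pmod 3$ the index is forced to be $1$, giving $k = p+1$ and the first bound; if $p \equiv 2 \pmod 3$, the index is $1$ or $3$, yielding $k \geq (p+1)/3$.
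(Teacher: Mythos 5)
The paper itself does not prove this lemma; it imports it verbatim from \cite[Proposition 3.3]{primeNajman}, so your attempt has to be measured against the proof in that reference. Your setup is sound and matches the standard framework: identifying $[\mathbb{Q}(C):\mathbb{Q}]$ with the orbit length of the line $C$ under $\bar G \leq N_{ns}/Z \cong D_{p+1}$, and using complex conjugation (trace $0$, determinant $-1$, hence reducible characteristic polynomial) to see that $\bar G$ is genuinely dihedral rather than cyclic. The gap is in the last step, and it is not a presentational gap: the ``careful arithmetic analysis'' you defer to cannot exist, because determinant-surjectivity plus the presence of complex conjugation simply do not force $[C_{ns}/Z : \bar G \cap (C_{ns}/Z)]$ to divide $3$. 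Concretely, for any divisor $k_0$ of $p+1$ let $H_{k_0}$ be the unique subgroup of $C_{ns}\cong\mathbb{F}_{p^2}^*$ of order $(p-1)k_0$; it contains the scalars $Z$, so $\det(H_{k_0})=N(H_{k_0})\supseteq N(Z)=(\mathbb{F}_p^*)^2$ already has index at most $2$, and for $p\equiv 3\pmod 4$ the group $G=\langle H_{k_0}, c\rangle$ (with $c$ a trace-zero, determinant $-1$ reflection) has fully surjective determinant, contains a complex-conjugation element, lies in $N_{ns}$, and has line-orbits of size at most $2k_0$. Taking $k_0=1$ gives $G=\langle Z,c\rangle$ with $[\mathbb{Q}(C):\mathbb{Q}]\leq 2$, which would violate both asserted bounds. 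So your constraints admit subgroups with arbitrarily small index $k$, and no purely group-theoretic manipulation of $\gcd(p-1,p+1)$ can rescue the bound.

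What is actually needed --- and what the cited proof uses --- is arithmetic input at the prime $p$ itself. Containment of the image in $N_{ns}(p)$ forces $E$ to have potentially good supersingular reduction at $p$ (potentially ordinary or multiplicative reduction would put non-scalar elements of nonzero trace and reducible characteristic polynomial into the image of inertia, which cannot lie in $N_{ns}$). Serre's description of the image of tame inertia via the fundamental characters of level $2$ then shows that $G\cap C_{ns}$ contains the subgroup of $e$-th powers of $C_{ns}$, where $e$ is the semistability defect, whose odd part divides $3$; after a quadratic twist to kill the even part (twisting does not change orbits of lines), one finds that the image of $G\cap C_{ns}$ in $C_{ns}/Z$ has index dividing $\gcd(3,p+1)$. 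This is exactly where the trichotomy on $p\bmod 3$ enters: the index is $1$ when $p\equiv 1\pmod 3$ and at most $3$ when $p\equiv 2\pmod 3$. Without this local analysis (or an equivalent appeal to the classification of possible mod-$p$ images over $\mathbb{Q}$), the statement is false at the level of abstract subgroups of $N_{ns}(p)$, so your proof as written does not close.
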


\begin{definition} \label{definedModP}
We say that the $p$-adic Galois representation $\rho_{E, p^{\infty}}: G_{\mathbb{Q}} \mapsto \GL_2(\mathbb{Z}_p)$ of $E$ is defined modulo $p^n$ if the
image $\rho_{E, p^{\infty}}(G_\mathbb{Q})$ contains the kernel of the reduction map $\GL_2(\mathbb{Z}_p) \mapsto \GL_2(\mathbb{Z}_p / p^n\mathbb{Z}_p)$.
\end{definition}

Notice that being defined modulo $p^n$ is invariant under conjugation.

\begin{lemma}[{\cite[Proposition 3.7.]{qOdd}}]
\label{definedP}
Let $E$ be an elliptic curve defined over a number field $K$ such that its $p$-adic representation is defined modulo $p^{n - 1}$ for some $n \geq 1$. Then for any cyclic subgroup $C$ of $E(\overline{K})$ of order $p^n$, we have
$[K(C) : K(pC)] = p$.
\end{lemma}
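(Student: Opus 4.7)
The plan is to reduce the claim to a counting problem inside $\GL_2(\mathbb{Z}/p^n\mathbb{Z})$. After choosing a $\mathbb{Z}/p^n$-basis of $E[p^n]$ whose first vector is a generator of $C$, the subgroup $C$ corresponds to $\langle(1,0)\rangle$ and $pC$ to $\langle(p,0)\rangle$ inside $(\mathbb{Z}/p^n\mathbb{Z})^2$, and $\rho_{E,p^n}$ takes values in the standard matrix group. Since being defined modulo $p^{n-1}$ is conjugation-invariant (as noted right after Definition~\ref{definedModP}), this change of basis does not affect the hypothesis.

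First I would identify the relevant subgroups. The setwise stabilizer of $C$ inside $\GL_2(\mathbb{Z}/p^n\mathbb{Z})$ is the standard Borel $B$ of upper-triangular matrices, while the setwise stabilizer $B'$ of $pC$ consists of those matrices whose lower-left entry is divisible by $p^{n-1}$. A direct count of the admissible $(a,b,c,d)$-tuples with the determinant constraint yields $[B':B]=p$.

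Next I would use the hypothesis to control the intersections with $G := \rho_{E,p^n}(G_K)$. Reducing modulo $p^n$, the hypothesis says that $G$ contains every matrix of the form $I+p^{n-1}M$ with $M\in M_2(\mathbb{F}_p)$. Taking $M$ with only a $1$ in the lower-left entry produces an explicit element of $(G\cap B')\setminus(G\cap B)$. Combined with the divisibility $[G\cap B':G\cap B]\mid[B':B]=p$, this forces $[G\cap B':G\cap B]=p$. Translating back, since $K(C)$ and $K(pC)$ are the fixed fields of the preimages in $G_K$ of $G\cap B$ and $G\cap B'$ respectively, we conclude $[K(C):K(pC)]=[G\cap B':G\cap B]=p$.

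The main conceptual step is precisely the observation that the hypothesis supplies the needed witness matrix lying in $B'$ but not in $B$; once this is in place, the rest is routine bookkeeping with matrices and Galois fixed fields.
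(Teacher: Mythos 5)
Your reduction to group theory is a reasonable framework, and your identification of $B$, $B'$, and the witness element is correct, but the step ``$[G\cap B':G\cap B]\mid[B':B]=p$'' is a genuine gap. That divisibility would follow if $B$ were normal in $B'$, but it is not: conjugating the diagonal matrix with entries $a,d$ by $I+p^{n-1}N$, where $N=\left(\begin{smallmatrix}0&0\\1&0\end{smallmatrix}\right)$, produces lower-left entry $p^{n-1}(a-d)$, which is nonzero whenever $a\not\equiv d\pmod{p}$. Nor is the divisibility a general fact about subgroups: for $X\le Y$ and $Z$ in a group one only gets $[Z\cap Y:Z\cap X]\le[Y:X]$ from the injection of coset spaces (in $S_3$ take $Y=S_3$, $X=\langle(1\,2)\rangle$, $Z=\langle(1\,3)\rangle$; the left side is $2$, the right side is $3$). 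So your argument as written only yields $2\le[G\cap B':G\cap B]\le p$, which does not pin down the index. The repair stays entirely inside your setup: since $N^2=0$, the witness $g=I+p^{n-1}N$ satisfies $g^k=I+kp^{n-1}N$, so $g$ has order exactly $p$ and none of $g,g^2,\dots,g^{p-1}$ lies in $B$; hence the cosets $g^k(G\cap B)$ for $0\le k<p$ are pairwise distinct inside $G\cap B'$, giving $[G\cap B':G\cap B]\ge p$, and the coset injection gives the matching upper bound $[G\cap B':G\cap B]\le[B':B]=p$. Equivalently, $g$ cyclically permutes the $p$ cyclic subgroups $\langle(1,tp^{n-1})\rangle$ lying above $pC$, so the orbit of $C$ under $G\cap B'$ has size exactly $p$.

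Two smaller points. First, your count $[B':B]=p$ silently requires $n\ge2$: for $n=1$ the subgroup $B'$ is all of $\GL_2(\mathbb{F}_p)$ and $[B':B]=p+1$, consistent with a surjective representation giving $[K(C):K]=p+1$ rather than $p$; only the case $n\ge2$ is ever used in this paper, but you should state the restriction. Second, the paper does not prove this lemma itself but imports it from the cited reference, so there is no internal proof to compare against; judged on its own terms, your write-up needs the repair above before the conclusion is actually established.
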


\section{Intermediate results}
\label{basicRes}
First we will draw several conclusions from some known results. Assume $E/\mathbb{Q}$ has a cyclic $n$-isogeny defined over a quadratic number field $K$ and $p \mid n$. Then $E$ has a $p$-isogeny defined over $K$. The main result of \cite{primeNajman} gives us that $p \in \{2,3,5,7,11,13,17,37\}$.

\subsection{Case $37 \mid n$} \label{data37}
We know from \cite[Theorem 1.11]{reprZywina} that $\rho_{E, 37}$ is surjective, conjugate to a subgroup of $N_{ns}(37)$, or $j(E) \in \{-7 \cdot 11^3, -7 \cdot 137^3 \cdot 2083^3\}$. Let $C$ be the kernel of a $37$-isogeny. If $\rho_{E, 37}$ is surjective or conjugate to a subgroup of $N_{ns}(37)$, we have from Lemmas \ref{surjective} and \ref{nonsplit} respectively that $[\mathbb{Q}(C) : \mathbb{Q}] \geq 38$. Otherwise, $j(E) \in \{-7 \cdot 11^3, -7 \cdot 137^3 \cdot 2083^3\}$ and there is a $37$-isogeny defined over $\mathbb{Q}$.

\subsection{Case $17 \mid n$} \label{data17}
We know from \cite[Theorem 1.11]{reprZywina} that $\rho_{E, 17}$ is surjective, conjugate to a subgroup of $N_{ns}(17)$, or $j(E) \in \{-17 \cdot 373^3 / 2^{17}, -17^2 \cdot 101^3 / 2\}$. Let $C$ be the kernel of a $17$-isogeny. If $\rho_{E, 17}$ is surjective or conjugate to a subgroup of $N_{ns}(17)$, we have from Lemmas \ref{surjective} and \ref{nonsplit} respectively that $[\mathbb{Q}(C) : \mathbb{Q}] \geq 6$. Otherwise, $j(E) \in \{-17 \cdot 373^3 / 2^{17}, -17^2 \cdot 101^3 / 2\}$ and there is a $17$-isogeny defined over $\mathbb{Q}$.

\subsection{Case $13 \mid n$} \label{data13}
We can use \cite[Theorem 1.8]{reprZywina} which tells us that the image of $\rho_{E, 13}$ is surjective or conjugate to a subgroup of $B(13)$, $N_s(13)$, $N_{ns}(13)$ or to a subgroup of $G_7$, a group whose image in $P\GL_2(\mathbb{F}_{13})$ is isomorphic to $S_4$. Let $C$ be the kernel of a $13$-isogeny. If $\rho_{E, 13}$ is surjective, we have $[\mathbb{Q}(C) : \mathbb{Q}] = 14$ by Lemma \ref{surjective}. The possibilities $N_{ns}(13)$ and $N_s(13)$ for a non-CM $E/\mathbb{Q}$ have been eliminated by \cite[Thoerem 1.1., Corollary 1.3.]{balDogra}. It was proved by \cite[Section 5.1.]{balDograExm} that the image of $\rho_{E, 13}$ is conjugate to a subgroup of $G_7$ for only three possible $j$-invariants of $E$. Also, by \cite[Theorem 1.8.]{reprZywina}, the image is exactly $G_7$ in those cases. If the image of $\rho_{E, 13}$ is $G_7$, we can use \cite[Table 2]{najGrowth} to see that if the image of $\rho_{E, 13}$ is conjugate to $G_7$, then $[\mathbb{Q}(P) : \mathbb{Q}] \geq 72$ for any $P$ of order $13$. By putting $C = \langle P \rangle$, we can use Lemma \ref{interdeg} to get: $[\mathbb{Q}(C) : \mathbb{Q}] = \frac{[\mathbb{Q}(P) : \mathbb{Q}]}{[\mathbb{Q}(P) : \mathbb{Q}(C)]} \geq 6$. Otherwise, there is a $13$-isogeny is defined over $\mathbb{Q}$, so by \cite[Table 3]{lozanoTable}, we know that $j(E) = \frac{(h^2+5h+13)(h^4+7h^3+20h^2+19h+1)^3}{h}$ for some $h \in \mathbb{Q}$.

\subsection{Case $11 \mid n$} \label{data11}
We can use \cite[Theorem 1.6]{reprZywina} which tells us that the image of $\rho_{E, 11}$ is surjective, conjugate to a subgroup of $B(11)$ or to a subgroup of $N_{ns}(11)$. Let $C$ be the kernel of an $11$-isogeny. If $\rho_{E, 11}$ is surjective, we have $[\mathbb{Q}(C) : \mathbb{Q}] = 12$ by Lemma \ref{surjective}. If the image of $\rho_{E, 11}$ is conjugate to a subgroup of $N_{ns}(11)$, we can use \cite[Table 1]{najGrowth} to see that in that case we have $[\mathbb{Q}(P) : \mathbb{Q}] = 120$ for any $P$ of order $11$. By putting $C = \langle P \rangle$, we can use Lemma \ref{interdeg} to get: $[\mathbb{Q}(C) : \mathbb{Q}] = \frac{[\mathbb{Q}(P) : \mathbb{Q}]}{[\mathbb{Q}(P) : \mathbb{Q}(C)]} \geq 12$. Otherwise, there is an $11$-isogeny is defined over $\mathbb{Q}$, so by \cite[Table 4]{lozanoTable}, we know that $j(E) \in \{-11 \cdot 131^3, -11^2\}$.

\subsection{Case $7 \mid n$} \label{data7}
We can use \cite[Theorem 1.5]{reprZywina} which tells us that the image of $\rho_{E, 7}$ is surjective, conjugate to a subgroup of $B(7)$, $N_{ns}(7)$ or $N_s(7)$. Let $C$ be the kernel of a $7$-isogeny. If the image of $\rho_{E, 7}$ is surjective or conjugate to a subgroup of $N_{ns}(7)$, we can use Lemmas \ref{surjective} and \ref{nonsplit} respectively to get that $[\mathbb{Q}(C) : \mathbb{Q}] \geq 8$. If the image of $\rho_{E, 7}$ is conjugate to a subgroup of $N_s(7)$, then $E$ has a cyclic $7$-isogeny defined over a quadratic extension of $\mathbb{Q}$ since a split-Cartan subgroup has index $2$ in its normalizer. Otherwise, there is a $7$-isogeny is defined over $\mathbb{Q}$, so by \cite[Table 3]{lozanoTable}, we know that $j(E) = \frac{(h^2+13h+49)(h^2 + 5h + 1)^3}{h}$ for some $h \in \mathbb{Q}$.

\subsection{Case $5 \mid n$} \label{data5}
We can use \cite[Theorem 1.4]{reprZywina} which tells us that the image of $\rho_{E, 5}$ is surjective, conjugate to a subgroup of $B(5)$, $N_{ns}(5)$, $N_s(5)$ or to a group $G_9$ which is a unique maximal subgroup of $\GL_2(\mathbb{F}_5)$ containing $N_s(5)$. Let $C$ be the kernel of a $5$-isogeny. If the image of $\rho_{E, 5}$ is surjective, we can use Lemma \ref{surjective} to get that $[\mathbb{Q}(C) : \mathbb{Q}] = 6$. If $\rho_{E, 5}$ is conjugate to a subgroup of $N_{ns}(5)$ or to $G_9$, we can use \cite[Table 2]{najGrowth} to see that in those cases we have $[\mathbb{Q}(P) : \mathbb{Q}] = 24$ for any $P$ of order $5$. By putting $C = \langle P \rangle$, we can use Lemma \ref{interdeg} to get: $[\mathbb{Q}(C) : \mathbb{Q}] = \frac{[\mathbb{Q}(P) : \mathbb{Q}]}{[\mathbb{Q}(P) : \mathbb{Q}(C)]} \geq 6$. If $\rho_{E, 5}$ is conjugate to a subgroup of $N_s(5)$, then $E$ has a cyclic $5$-isogeny defined over a quadratic extension of $\mathbb{Q}$ since a split-Cartan subgroup has index $2$ in its normalizer. Otherwise, $E$ has a $5$-isogeny defined over $\mathbb{Q}$. Hence, by \cite[Table 3]{lozanoTable}, we know that
$j(E) = \frac{(h^2 + 10h + 5)^3}{h}$ for some $h \in \mathbb{Q}$.

\subsection{Case $3 \mid n$} \label{data3}
We can use \cite[Theorem 1.2]{reprZywina} which tells us that the image of $\rho_{E, 3}$ is surjective or conjugate to a subgroup of $B(3)$, $N_{ns}(3)$ or $N_s(3)$. Let $C$ be the kernel of a $3$-isogeny. If the image of $\rho_{E, 3}$ is surjective, we can use Lemma \ref{surjective} to get that $[\mathbb{Q}(C) : \mathbb{Q}] = 4$. If $\rho_{E, 3}$ is conjugate to a subgroup of $N_{ns}(3)$, we can use \cite[Table 2]{najGrowth} to see that in those cases we have $[\mathbb{Q}(P) : \mathbb{Q}] = 8$ for any $P$ of order $3$. By putting $C = \langle P \rangle$, we can use Lemma \ref{interdeg} to get: $[\mathbb{Q}(C) : \mathbb{Q}] = \frac{[\mathbb{Q}(P) : \mathbb{Q}]}{[\mathbb{Q}(P) : \mathbb{Q}(C)]} \geq 4$. We can also see from \cite[Theorem 1.2]{reprZywina} that if $\rho_{E, 3}$ is not surjective, then either $E$ has a $3$-isogeny defined over $\mathbb{Q}$, or $j(E) = h^3$ for $h \in \mathbb{Q}$.

\subsection{Case $2 \mid n$} \label{data2}
We can use \cite[Theorem 1.1]{reprZywina} to see that either $\rho_{E, 2}$ is surjective, $E$ has a $2$-isogeny over $\mathbb{Q}$ or that $j(E) = h^2 + 1728$ for some $h \in \mathbb{Q}$. Notice that if $\rho_{E, 2}$ is surjective, we can again use Lemma \ref{surjective} to get that $[\mathbb{Q}(C) : \mathbb{Q}] = 3$.

\section{Two different prime divisors of isogeny degree}

In this section we will consider a situation when $E/\mathbb{Q}$ without CM has a cyclic $n$-isogeny defined over a quadratic extension of $\mathbb{Q}$ and $n$ has at least two different prime divisors $p < q$. It is known that if we only consider the isogenies defined over $\mathbb{Q}$, then $(p, q) \in \{(2, 3), (2, 5), (3, 5), (3, 7)\}$. Notice that in the statement of the below Lemma, we allow pairs $(2, 7)$ and $(7, 13)$ to potentially occur, but we will eliminate them in the later chapters.

\begin{lemma} \label{diffPrimes}
Let $E/ \mathbb{Q}$ be a non-CM elliptic curve with a cyclic $n$-isogeny defined over a quadratic number field $K$. Assume that $n$ has at least two different prime divisors $p$ and $q$ with $p < q$. Then all possible pairs $(p, q)$ are the same ones as for $\mathbb{Q}$-rational isogenies plus the pairs $(2, 7)$ and $(7, 13)$.
\end{lemma}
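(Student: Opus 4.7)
The key observation is that a $K$-rational cyclic $n$-isogeny contains, for every prime $\ell \mid n$, a unique cyclic subgroup $C_\ell \subseteq E$ of order $\ell$ defined over $K$; hence $[\mathbb{Q}(C_\ell):\mathbb{Q}] \leq [K:\mathbb{Q}] = 2$. The plan is to first sharpen the per-prime analysis of \Cref{basicRes} under this inequality, and then to eliminate pairs $(p, q)$ case by case. The sharpening reads: for $\ell = 2$ both the surjective case and the $j(E) = h^2 + 1728$ case yield $[\mathbb{Q}(C_2):\mathbb{Q}] = 3$, so $E$ must have a $\mathbb{Q}$-rational $2$-isogeny; for $\ell \in \{3,5,7\}$ only the reducible (Borel) and $N_s(\ell)$ images survive; for $\ell \in \{11, 17, 37\}$ only the Borel image survives, pinning $j(E)$ to one of at most two sporadic values; and for $\ell = 13$ only the Borel image survives, so $E$ admits a $\mathbb{Q}$-rational $13$-isogeny and $j(E)$ lies in the rational one-parameter family from \cite{lozanoTable}.

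Now fix primes $p < q$ both dividing $n$. \emph{(a)} If both $p$- and $q$-isogenies are $\mathbb{Q}$-rational, the cyclic $pq$-isogeny is $\mathbb{Q}$-rational as well, and Mazur--Kenku forces $pq \in \{6, 10, 15, 21\}$. \emph{(b)} If $q \in \{11, 17, 37\}$, then $j(E)$ is one of the at most two sporadic values listed in \Cref{basicRes}; for each such value I would look up the corresponding curves on LMFDB, or compute their mod-$p$ Galois images using \cite{reprZywina}, to verify that they admit no cyclic $p$-isogeny over any quadratic field for any prime $p \neq q$. \emph{(c)} If $q = 13$ and $p \in \{2, 3, 5\}$, then the pair of a $\mathbb{Q}$-rational $13$-isogeny and a $K$-rational $p$-isogeny yields a $K$-rational non-cuspidal point on $X_0(13p) \in \{X_0(26), X_0(39), X_0(65)\}$; the quadratic points on these curves are described in \cite{bnQcurve, OzmanSiksek19, NajmanVuk}, and I would check that none corresponds to a non-CM elliptic curve with rational $j$-invariant. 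The candidate $(11, 13)$ is handled by intersecting the two sporadic $j$-values for $11$-isogeny with the parametrized family for $13$-isogeny, a purely arithmetic check. \emph{(d)} For the candidate $(5, 7)$, one splits into four subcases depending on whether $C_5$ and $C_7$ each come from a Borel or from an $N_s$ image: the double-Borel subcase gives a rational $35$-isogeny, impossible by Mazur--Kenku, and the remaining three subcases would be ruled out by analyzing rational points on $X_0(35)$, on the fiber product $X_0(5) \times_{X(1)} X_{sp}(7)$, and on $X_{sp}(35)$, using the techniques from the cited references.

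The main obstacle is case (d) together with parts of case (c): both rely on concrete descriptions of (quadratic) points on specific modular curves, and extracting from those classifications the precise statement that no non-CM $E/\mathbb{Q}$ with rational $j$-invariant arises is the genuinely delicate step. Case (b) reduces to a finite computation that can be verified by the Magma code linked in the introduction, and case (a) is immediate from Mazur--Kenku.
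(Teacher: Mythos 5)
Your overall architecture --- first cutting down the mod-$\ell$ image for each prime $\ell \mid n$ using the bound $[\mathbb{Q}(C_\ell):\mathbb{Q}] \leq 2$, then eliminating pairs via sporadic $j$-invariants, Mazur--Kenku, and points on $X_0(pq)$ --- is essentially the paper's (Section~\ref{basicRes} followed by the case analysis in Lemma~\ref{diffPrimes}), and your per-prime sharpening is correct; noting that the $j=h^2+1728$ case already forces $[\mathbb{Q}(C_2):\mathbb{Q}]=3$ is even a small simplification over the paper, which instead matches $j=h^2+1728$ against the sporadic values. But two steps of your plan do not close as written. First, in case~(d) the four-way subcase split is both unnecessary and a liability: whatever the mod-$5$ and mod-$7$ images are, the kernels $C_5$ and $C_7$ sum to a $K$-rational cyclic subgroup of order $35$, so \emph{every} subcase yields a quadratic point on $X_0(35)$, and you never need rational points on $X_0(5)\times_{X(1)}X_{sp}(7)$ or on $X_{sp}(35)$ --- curves whose rational points are not described in the literature you cite and which would require a fresh, possibly difficult, computation. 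This uniform reduction to quadratic points on $X_0(35)$ (from \cite{bnQcurve}) is exactly what the paper does; you use the same "combine the kernels" move for $q=13$ in case~(c), so its omission here is an oversight, but as proposed that branch is not executable.

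Second, in cases~(c) and~(d) the step "check that none of the quadratic points corresponds to a non-CM curve with rational $j$-invariant" is not a finite verification: $X_0(26)$, $X_0(39)$ and $X_0(35)$ are hyperelliptic, and all quadratic points on $X_0(65)$ are pulled back from $X_0(65)^{+}(\mathbb{Q})$, so each of these curves has infinitely many quadratic points. The missing ingredient is the mechanism the paper uses repeatedly: on the non-exceptional points the hyperelliptic (equivalently Atkin--Lehner) involution agrees with Galois conjugation, so a point with rational $j$-invariant would represent a curve isogenous to its conjugate, hence to itself, hence CM. Without this (or an equivalent) argument, your cases (c) and (d) reduce to an infinite check. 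The remaining divergences from the paper --- testing the finitely many sporadic curves directly in case~(b) rather than matching $j$-invariant formulas, and using $X_0(26)$ and $X_0(39)$ in place of the paper's Chabauty computations on the curves obtained by equating $j=h^3$ or $j=h^2+1728$ with the $13$-isogeny family --- are legitimate alternatives and would go through once the two gaps above are filled.
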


\begin{proof}
Clearly, $E$ has a $p$-isogeny and a $q$-isogeny over $K$. We will constantly be using the conclusions from Section \ref{basicRes}.

\textbf{Case $q = 37$:} We know from subsection \ref{data37} that $j(E) \in \{-7 \cdot 11^3, -7 \cdot 137^3 \cdot 2083^3\}$ and there is a $37$-isogeny defined over $\mathbb{Q}$. If $p = 17$, then we see from subsection \ref{data17} that $j(E) \in \{-17 \cdot 373^3 / 2^{17}, -17^2 \cdot 101^3 / 2\}$, so this is impossible. If $p = 13$, then we see from subsection \ref{data13} that we must have a $13$-isogeny defined over $\mathbb{Q}$, so we have a $481$-isogeny over $\mathbb{Q}$, but that is impossible. If $p = 11$, then we see from subsection \ref{data11} that we must have a $11$-isogeny defined over $\mathbb{Q}$, so we have a $143$-isogeny over $\mathbb{Q}$, but that is impossible.

If $p = 7$, we see from subsection \ref{data7} that either $E$ has a $7$-isogeny defined over $\mathbb{Q}$ or over a quadratic extension. From subsection \ref{data7} we see that $j(E) = \frac{(h^2 + 13h + 49)(h^2 + 5h + 1)^3}{h}$ for some $h$ in some (at most) quadratic extension of $\mathbb{Q}$. We match the above formula for $j$-invariant with the two possible $j$-invariants that allow a rational $37$-isogeny and in both cases we get a polynomial over $\mathbb{Q}$ with no quadratic roots, so $h$ can't be from a quadratic extension.

If $p = 5$, we can do the analogous computation as for $p=7$. We have $j(E) = \frac{(h^2 + 10h + 5)^3}{h}$ for some $h$ inside some (at most) quadratic extension of $\mathbb{Q}$. We match the $j$-invariants like before and again we always get a polynomial over $\mathbb{Q}$ with no quadratic roots, so $h$ can't be from a quadratic extension of $\mathbb{Q}$.

If $p = 3$, then we see from subsection \ref{data3} that either $E$ has a $3$-isogeny defined over $\mathbb{Q}$ or $j(E) = h^3$. If $E$ had a $3$-isogeny defined over $\mathbb{Q}$, it would have a $111$-isogeny over $\mathbb{Q}$, which is impossible. The remaining option is that $j(E) = h^3$ for some $h \in \mathbb{Q}$, which can't match the $j$-invariants allowing a rational $37$-isogeny.

If $p = 2$, then we see from subsection \ref{data2} that either $E$ has a $2$-isogeny defined over $\mathbb{Q}$ or $j(E) = h^2 + 1728$. If $E$ had a $2$-isogeny defined over $\mathbb{Q}$, it would have a $74$-isogeny over $\mathbb{Q}$, which is impossible. The remaining option is that $j(E) = h^2 + 1728$ for some $h \in \mathbb{Q}$, which can't match the $j$-invariants allowing a rational $37$-isogeny.

\textbf{Case $q = 17$:} We know from subsection \ref{data17} that $j(E) \in \{-17 \cdot 373^3/2^{17}, -17^2 \cdot 101^3/2\}$ and there is a $17$-isogeny defined over $\mathbb{Q}$. If $p = 13$, then we see from subsection \ref{data13} that we must have a $13$-isogeny defined over $\mathbb{Q}$, so we have a $221$-isogeny over $\mathbb{Q}$, but that is impossible. If $p = 11$, then we see from subsection \ref{data13} that we must have a $11$-isogeny defined over $\mathbb{Q}$, so we have a $187$-isogeny over $\mathbb{Q}$, but that is impossible.

If $p = 7$, we see from subsection \ref{data7} that either $E$ has a $7$-isogeny defined over $\mathbb{Q}$ or over a quadratic extension. That means that, by \cite[Table 3]{lozanoTable}, $j(E) = \frac{(h^2 + 13h + 49)(h^2 + 5h + 1)^3}{h}$ for some $h$ in some (at most) quadratic extension of $\mathbb{Q}$. We match the above formula for $j$-invariant with the two possible $j$-invariants that allow a rational $17$-isogeny and in both cases we get a polynomial over $\mathbb{Q}$ with no quadratic roots, so $h$ can't be from a quadratic extension.

If $p = 5$, we can do the analogous computation as for $p=7$. We have $j(E) = \frac{(h^2 + 10h + 5)^3}{h}$ for some $h$ inside some (at most) quadratic extension of $\mathbb{Q}$. We match the $j$-invariants like before and again we always get a polynomial over $\mathbb{Q}$ with no quadratic roots, so $h$ can't be from a quadratic extension of $\mathbb{Q}$.

If $p = 3$, then we see from subsection \ref{data3} that either $E$ has a $3$-isogeny defined over $\mathbb{Q}$ or $j(E) = h^3$. If $E$ had a $3$-isogeny defined over $\mathbb{Q}$, it would have a $111$-isogeny over $\mathbb{Q}$, which is impossible. The remaining option is that $j(E) = h^3$ for some $h \in \mathbb{Q}$, which can't match the $j$-invariants allowing a rational $17$-isogeny.

If $p = 2$, then we see from subsection \ref{data2} that either $E$ has a $2$-isogeny defined over $\mathbb{Q}$ or $j(E) = h^2 + 1728$. If $E$ had a $2$-isogeny defined over $\mathbb{Q}$, it would have a $34$-isogeny over $\mathbb{Q}$, which is impossible. The remaining option is that $j(E) = h^2 + 1728$ for some $h \in \mathbb{Q}$, which can't match the $j$-invariants allowing a rational $17$-isogeny.

\textbf{Case $q = 13$:} We know from subsection \ref{data13} that the $13$-isogeny is defined over $\mathbb{Q}$ and that
\[
j(E) = \frac{(h^2+5h+13)(h^4+7h^3+20h^2+19h+1)^3}{h}
\]
for some $h \in \mathbb{Q}$. If $p = 11$, then we know from subsection \ref{data11} that the $11$-isogeny must be defined over $\mathbb{Q}$, so $E$ has a $143$-isogeny over $\mathbb{Q}$, a contradiction. If $p = 7$, then that case is more difficult and we will only solve it near the end of this paper, see Section \ref{91iso}.

If $p = 5$, then $E$ has a $65$-isogeny defined over a quadratic extension of $\mathbb{Q}$. We can use the result from \cite[Section 4]{Box19} about quadratic points on $X_0(65)$. The result states that all quadratic points on $X_0(65)$ are coming from $X_0(65)^{+}(\mathbb{Q})$ via quotient map $\rho : X_0(65) \mapsto X_0(65)^{+}$. Notice that $X_0(65)(\mathbb{Q})$ contains no non-cuspidal points, so we can assume that $E$ is represented by some quadratic, but not rational point $Q$ on $X_0(65)$. If $Q$ represents the pair $(E, C)$, then the point $w_{65}(Q)$ is the same as $Q^{\sigma}$ (Galois conjugate) and represents a pair $(E^{\sigma}, C')$, where $E$ and $E^{\sigma}$ are $65$-isogenous. Since $E$ is defined over $\mathbb{Q}$, we have $E \cong E^{\sigma}$ and $E$ is $65$-isogenous to itself, hence it has CM, contradiction.

If $p = 3$, then we see from subsection \ref{data3} that either $E$ has a $3$-isogeny defined over $\mathbb{Q}$ or $j(E) = h^3$. If $E$ had a $3$-isogeny defined over $\mathbb{Q}$, it would have a $39$-isogeny over $\mathbb{Q}$, which is impossible. The remaining option is that $j(E) = h^3$ for some $h \in \mathbb{Q}$. We match that formula to the above formula for $j$-invariants allowing a rational $13$-isogeny. We obtain a genus $2$ curve with a Jacobian of rank $0$. By using built-in \texttt{Chabauty0()} function in Magma, we easily get that our curve has only one rational point which doesn't give us the desired elliptic curve.

If $p = 2$, then we see from subsection \ref{data2} that either $E$ has a $2$-isogeny defined over $\mathbb{Q}$ or $j(E) = h^2 + 1728$. If $E$ had a $2$-isogeny defined over $\mathbb{Q}$, it would have a $26$-isogeny over $\mathbb{Q}$, which is impossible. The remaining option is that $j(E) = h^2 + 1728$ for some $h \in \mathbb{Q}$. We match that formula to the above formula for $j$-invariants allowing a rational $13$-isogeny. This time we get a genus $1$ curve containing only one rational point which doesn't give us the desired elliptic curve.

\textbf{Case $q = 11$:} We know from subsection \ref{data11} that the $11$-isogeny is defined over $\mathbb{Q}$ and that $j(E) \in \{-11 \cdot 131^3, -11^2\}$. If $p = 7$, we see from subsection \ref{data7} that $j(E) = \frac{(h^2 + 13h + 49)(h^2 + 5h + 1)^3}{h}$ for $h$ in (at most) quadratic extension of $\mathbb{Q}$. We again get a polynomial with no quadratic roots by matching that formula to the possible $j$-invariants allowing a rational $11$-isogeny. Hence, this is impossible.

If $p = 5$, we see from subsection \ref{data5} that $j(E) = \frac{(h^2 + 10h + 5)^3}{h}$ for $h$ in (at most) quadratic extension of $\mathbb{Q}$. We again get a polynomial with no quadratic roots by matching that formula to the possible $j$-invariants allowing a rational $11$-isogeny. Hence, this is impossible.

If $p = 3$, then we see from subsection \ref{data3} that either $E$ has a $3$-isogeny defined over $\mathbb{Q}$ or $j(E) = h^3$ for $h \in \mathbb{Q}$. If $E$ had a $3$-isogeny defined over $\mathbb{Q}$, it would have a $33$-isogeny over $\mathbb{Q}$, which is impossible. The remaining option is that $j(E) = h^3$ for some $h \in \mathbb{Q}$. We match that formula to the possible $j$-invariants allowing a rational $11$-isogeny and we easily see that this case is impossible.

If $p = 2$, then we see from subsection \ref{data2} that either $E$ has a $2$-isogeny defined over $\mathbb{Q}$ or $j(E) = h^2 + 1728$. If $E$ had a $2$-isogeny defined over $\mathbb{Q}$, it would have a $22$-isogeny over $\mathbb{Q}$, which is impossible. The remaining option is that $j(E) = h^2 + 1728$ for some $h \in \mathbb{Q}$. We match that formula to the possible $j$-invariants allowing a rational $11$-isogeny and we easily see that this case is impossible.

\textbf{Case $q = 7$:} The only situation we have to eliminate here is $p = 5$. If $p = 5$, we can use a similar argument as in the case $(p, q) = (5, 13)$. We use the result from \cite[Table 9]{bnQcurve}. It is well-known that there are no non-cuspidal rational points on $X_0(35)$. Any non-exceptional quadratic point $P$ can be paired up with its Galois conjugate $P^{\sigma}$ which is equal to $w_{35}(P)$. If $P$ represented some $E$ with a rational $j$-invariant, then $w_{35}(P)$ would represent $E^{\sigma}$ which is $35$-isogenous to $E$. Since $j(E) = j(E^{\sigma})$, we have that $E$ is $35$-isogenous to itself, so it has CM, a contradiction. There is only one exceptional quadratic point on $X_0(35)$ and it corresponds to a CM curve. Therefore, this case is impossible. This completes the proof of Lemma \ref{diffPrimes}.

\end{proof}

\section{Non-squarefree isogeny degrees}
\label{sqPrime}
In this section we will consider a situation when $E/\mathbb{Q}$ without CM has an $n$-isogeny defined over a quadratic extension of $\mathbb{Q}$ and $n$ is divisible by $p^2$ for some prime $p$. It is known that if we only consider the isogenies defined over $\mathbb{Q}$, then $p \in \{2,3,5\}$.

\begin{proposition} \label{7iso}
Let $E/ \mathbb{Q}$ be a non-CM elliptic curve with a cyclic $n$-isogeny defined over a quadratic number field $K$. Assume that $p^2 \mid n$ for some prime $p$. Then $p \in \{2, 3, 5\}$.
\end{proposition}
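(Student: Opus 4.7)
The plan is to rule out $p^2 \mid n$ for every $p \in \{7, 11, 13, 17, 37\}$; by Lemma~\ref{diffPrimes} these are the only primes other than $\{2,3,5\}$ that could divide $n$. Denote by $C$ the cyclic kernel of the hypothetical $p^2$-isogeny, so that both $C$ and $pC$ are defined over $K$, giving $\mathbb{Q}(C),\mathbb{Q}(pC)\subseteq K$ and hence $[\mathbb{Q}(C):\mathbb{Q}],[\mathbb{Q}(pC):\mathbb{Q}]\leq 2$.

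The first step is to feed $pC$, a cyclic subgroup of order $p$, into the case analysis of Section~\ref{basicRes}. For $p\in\{11,13,17,37\}$ the bounds in subsections~\ref{data11}--\ref{data37} show that unless $\rho_{E,p}$ is contained in a Borel subgroup, one has $[\mathbb{Q}(pC):\mathbb{Q}]\geq 6$, incompatible with $[\mathbb{Q}(pC):\mathbb{Q}]\leq 2$. So in each of these cases $E$ has a $\mathbb{Q}$-rational $p$-isogeny and $j(E)$ belongs to the corresponding explicit list: finitely many values for $p\in\{11,17,37\}$, and a rational one-parameter family for $p=13$. For $p=7$, subsection~\ref{data7} leaves two possibilities: the Borel case (a rational parametrization for $j(E)$) or the split Cartan case $\rho_{E,7}\subseteq N_s(7)$.

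The second step is to apply Lemma~\ref{definedP} with $n=2$. Whenever $\rho_{E,p^\infty}$ is defined modulo $p$, the lemma yields $[\mathbb{Q}(C):\mathbb{Q}(pC)]=p$, forcing $p\mid[\mathbb{Q}(C):\mathbb{Q}]\leq 2$, which is impossible for any $p\geq 3$ on our list. Thus the work reduces to verifying the ``defined modulo $p$'' hypothesis for each $E$ surviving the first step. For the finitely many $j$-invariants at $p\in\{11,17,37\}$ this is a direct check on the corresponding $p$-adic Galois images via the Rouse--Zureick-Brown/Sutherland--Zywina/LMFDB tables. For $p=13$ one argues along the entire parametrization of subsection~\ref{data13}, by tracking the image of $\rho_{E,169}$ generically. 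The Borel subcase at $p=7$ is handled in exactly the same way.

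The main obstacle I anticipate is the split Cartan subcase at $p=7$, where the hypothesis of Lemma~\ref{definedP} can genuinely fail and the above reduction is inapplicable. I would treat it via the modular curve $X_0(49)$, which is an elliptic curve of rank $0$ over $\mathbb{Q}$, mimicking the argument the author already uses in Lemma~\ref{diffPrimes} for $X_0(65)$ and $X_0(35)$: a non-cuspidal quadratic point $Q \in X_0(49)(K)$ and its Galois conjugate $Q^\sigma=w_{49}(Q)$ correspond to pairs $(E,C)$ and $(E^\sigma,C')$ with $E$ and $E^\sigma$ linked by a cyclic $49$-isogeny, and the hypothesis $j(E)\in\mathbb{Q}$ gives $E\cong E^\sigma$ over some extension, so $E$ would be $49$-isogenous to itself and therefore CM, a contradiction. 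This is the step where I expect the bulk of the delicacy (ruling out sporadic/exceptional quadratic points) to live.
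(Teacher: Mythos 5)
Your first step (forcing a rational $p$-isogeny for $p\in\{11,13,17,37\}$ from the degree bounds of Section~\ref{basicRes}, then invoking Lemma~\ref{definedP} once the $p$-adic image is known to be defined modulo $p$) is the same skeleton as the paper's argument, although your justification of the ``defined modulo $p$'' hypothesis is weaker than what is needed: for $p=13$ and for the Borel subcase at $p=7$ the curves form one-parameter families, and ``tracking the image of $\rho_{E,p^2}$ generically'' does not rule out exceptional specializations. The paper gets this uniformly from a theorem of Lombardo--Tronto: a rational $p$-isogeny for $p\geq 7$ forces the image of $\rho_{E,p^\infty}$ to contain a Sylow pro-$p$ subgroup of $\GL_2(\mathbb{Z}_p)$. (Also, the list $p\in\{2,3,5,7,11,13,17,37\}$ comes from the main theorem of Najman's paper on prime-degree isogenies, not from Lemma~\ref{diffPrimes}, which only concerns pairs of distinct prime divisors.)

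The genuine gap is in the split Cartan subcase at $p=7$, which you correctly identify as the crux but then propose to handle by an argument that fails. The curve $X_0(49)$ has genus $1$; its Atkin--Lehner quotient $X_0(49)/w_{49}\cong X_s(7)$ has genus $0$ with rational points, so $X_0(49)$ has infinitely many quadratic points, and the identity $Q^{\sigma}=w_{49}(Q)$ holds \emph{only} for the non-exceptional ones (pullbacks of rational points on the quotient). For a general quadratic point $(E,C)$ with $E/\mathbb{Q}$ and $\mathbb{Q}(C)=K$ quadratic, the conjugate is $(E,C^{\sigma})$ with $C^{\sigma}\neq C$, and there is no reason this should equal $w_{49}((E,C))=(E/C,E[49]/C)$; asserting it essentially presupposes the CM conclusion. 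Since the exceptional quadratic points on a positive-rank-over-some-$K$ genus-$1$ curve are infinite in number, they cannot be ``ruled out'' as sporadic, and no Chabauty-type classification of quadratic points on $X_0(49)$ is available. The paper circumvents this entirely by working with the mod $49$ Galois image: it enumerates the subgroups of $\GL_2(\mathbb{Z}/49\mathbb{Z})$ reducing to $N_s(7)$, eliminates most by the scalar criterion of Lombardo--Tronto or by orbit lengths ($14$ and $42$) on cyclic subgroups of order $49$, handles the two images occurring only for $j(E)=2268945/128$ by factoring $\Phi_{49}(X,j(E))$, and reduces the surviving case (image inside $N_s(49)$) to rational points on $X_s(49)\cong X_0^{+}(7^4)$, which are only cusps and CM points by Momose's theorem. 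Some input of this kind --- pushing the split Cartan condition up to level $49$ and using $X_0^{+}(7^4)$ rather than $X_0(49)$ --- is the missing idea.
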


\begin{proof}
Clearly, $E$ has a cyclic $p^2$-isogeny and a cyclic $p$-isogeny defined over $K$.

Assume $p > 7$. Then we know from section \ref{basicRes} that our $p$-isogeny has to be defined over $\mathbb{Q}$. We can use \cite[Theorem 3.9]{lombTron} to conclude that if $E$ has a $p$-isogeny over $\mathbb{Q}$, then the image of $\rho_{E, p^{\infty}}$ contains a Sylow pro-$p$ subgroup of $\GL_2(\mathbb{Z}_p)$.

Every Sylow pro-$p$ subgroup of $\GL_2(\mathbb{Z}_p)$ is conjugate to this specific Sylow pro-$p$ subgroup:
\[
S = \Bigg\{
\begin{pmatrix}
a & b \\
c & d
\end{pmatrix} \in \GL_2(\mathbb{Z}_p) \mid a \equiv d \equiv 1 \pmod{p}, \quad c \equiv 0 \pmod{p}
\Bigg\}.
\]
so we can choose compatible bases for all $E[p^k]$ such that the image of $\rho_{E, p^{\infty}}$ contains $S$.

This means that $p$-adic representation $\rho_{E, p^{\infty}}$ is defined modulo $p$ (see definition \ref{definedModP}). Now we can use Lemma \ref{definedP} to conclude that for any cyclic subgroup $C$ of $E(\overline{\mathbb{Q}})$ of order $p^2$, we have $[\mathbb{Q}(C) : \mathbb{Q}(pC)] = p$, so any $p^2$-isogeny has to be defined over a field of degree at least $p > 7$.

If $p = 7$ and $E$ has a rational $7$-isogeny, we can repeat the identical conclusions as above. Otherwise, we know from subsection \ref{data7} that the image of $\rho_{E, 7}$ is conjugate to a subgroup of $N_s(7)$.

There are three such possible images, two of which only appear when $j(E) = 2268945/128$, according to \cite[Theorem 1.5]{reprZywina}.

If we have $j(E) = 2268945/128$, we can use the classical modular polynomial $\Phi_{N}(X, Y)$. It is known from \cite{igusa} that for a field $F$ of characteristic not dividing $N$, a non-CM elliptic curve $E/F$ has a cyclic $N$-isogeny if and only if $\Phi_{N}(X, j(E))$ has a zero in $F$. We can factorize $\Phi_{49}(X, 2268945/128)$ into three irreducible factors of degrees $14$, $14$, $21$ respectively. Therefore, a cyclic $49$-isogeny is defined over a number field of degree (at least) $14$.

The third possible image is the whole $N_s(7)$. We use \texttt{Magma} to check all subgroups of $\GL_2(\mathbb{Z}/49\mathbb{Z})$ and select only those which reduce modulo $7$ to $N_s(7)$, all up to conjugation.

There are $8$ such subgroups of $\GL_2(\mathbb{Z}/49\mathbb{Z})$ up to conjugation. Call them $H_i$ for $i \in \{1, 2, \ldots, 8\}$. By using \cite[Theroem 3.16]{lombTron}, we see that all $H_i$ must contain all scalars congruent to $1$ modulo $7$. This property is clearly not affected by conjugation.

Group $H_1$ is of order $72$ and contains one scalar congruent to $1$ modulo $7$, so it is eliminated.

Group $H_2$ is of order $504$ and contains one scalar congruent to $1$ modulo $7$, so it is eliminated.

Group $H_3$ is of order $504$ and is conjugate to a subgroup of $N_s(49)$.

Group $H_4$ is of order $3528$ and contains one scalar congruent to $1$ modulo $7$, so it is eliminated.

Group $H_5$ is of order $3528$ and is conjugate to $N_s(49)$.

Group $H_6$ is of order $24696$ and contains one scalar congruent to $1$ modulo $7$, so it is eliminated.

Groups $H_7$ and $H_8$ are of orders $24696$ and $172872$ respectively and they act on the cyclic subgroups of $E[49]$ of order $49$. The corresponding orbit lengths are $14$ and $42$ in both cases, so a cyclic $49$-isogeny is defined over the field of degree (at least) $14$. The only subgroups we still didn't eliminate are those conjugate to some subgroup of $N_s(49)$. If there exists a non-CM elliptic curve over $\mathbb{Q}$ such that its mod $49$ representation falls into that category, it will be represented by a point on $X_{s}(49)(\mathbb{Q})$. Recall that $X_s(n)$ is the modular curve associated to the normalizer of the split Cartan subgroup.

We can, for example, use the comment from \cite{parentCom} which relies on \cite{moduliKM} to recall that there is a $\mathbb{Q}$-isomorphism $X_{s}(N) \cong X_0(N^2) / w_{N^2} \equiv X_0^{+}(N^2)$, where $w_{N^2}$ is the Atkin-Lehner involution. One can also look at \cite[Section 2]{biluParReb} for the modular interpretation of the aforementioned isomorphism to see that CM points and cusps on $X_{s}(p^r)$ correspond to CM points and cusps on $X_0^{+}(p^{2r})$ for a prime $p$. We know from \cite[Theorem 3.14]{momoShim} that $X_0^{+}(7^r)(\mathbb{Q})$ consists only of cusps and CM-points for $r \geq 3$. Since we were considering $X_{s}(7^2) \cong X_0(7^4) / w_{7^4} \equiv X_0^{+}(7^4)$, we are done. Cases $p \in \{2, 3, 5\}$ are already possible over $\mathbb{Q}$. Therefore, this completes the proof. 

\end{proof}

\section{Isogenies of degree $5^k$}\label{5isogenies}

We will now consider the isogenies divisible by powers of $5$. We will use the following theorem:

\begin{tm}[{\cite[Theorem 2]{greenIsog}}] \label{isog5}
Let $E / \mathbb{Q}$ be a non-CM elliptic curve with an isogeny of degree $5$ defined over $\mathbb{Q}$. If none of the elliptic curves in the $\mathbb{Q}$-isogeny class of $E$ has two independent isogenies of degree $5$, then the image of $\rho_{E, 5^{\infty}}$ contains a Sylow pro-$5$ subgroup of $\GL_2(\mathbb{Z}_5)$. Otherwise, the index $[\GL_2(\mathbb{Z}_5) : Im(\rho_{E, 5^{\infty}})]$ is divisible by $5$, but not by $25$. 
\end{tm}

We prove the following Lemma which considers the situation when $E$ has a rational $5$-isogeny:

\begin{lemma} \label{yes5iso}
Let $E/ \mathbb{Q}$ be a non-CM elliptic curve with a cyclic $n$-isogeny defined over a number field $K$ such that $5^k \mid n$ with $k \geq 2$. Assume $E$ also has a $5$-isogeny defined over $\mathbb{Q}$. Then $[K : \mathbb{Q}] \geq 5^{k-2}$.
\end{lemma}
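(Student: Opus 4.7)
The plan is to combine Green's theorem (Theorem \ref{isog5}) with an orbit-counting argument for the Galois action on cyclic subgroups of $E[5^k]$. Let $C \subseteq E[5^k]$ denote the kernel of the cyclic $5^k$-isogeny, so $C$ is $G_K$-stable and $K \supseteq \mathbb{Q}(C)$. It therefore suffices to show $[\mathbb{Q}(C) : \mathbb{Q}] \geq 5^{k-2}$.

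First I would apply Theorem \ref{isog5} to the $5$-adic image $G := \mathrm{Im}(\rho_{E, 5^\infty}) \subseteq \GL_2(\mathbb{Z}_5)$. In the first case of the theorem $G$ contains a Sylow pro-$5$ subgroup, so $v_5([\GL_2(\mathbb{Z}_5) : G]) = 0$; in the second case $[\GL_2(\mathbb{Z}_5) : G]$ is divisible by $5$ but not $25$, so $v_5([\GL_2(\mathbb{Z}_5) : G]) = 1$. Either way, $v_5([\GL_2(\mathbb{Z}_5) : G]) \leq 1$, and the same bound then holds for $v_5([\GL_2(\mathbb{Z}/5^k\mathbb{Z}) : G_k])$, where $G_k := \mathrm{Im}(\rho_{E, 5^k})$, since the latter index is the index of $G \cdot \ker(\GL_2(\mathbb{Z}_5) \twoheadrightarrow \GL_2(\mathbb{Z}/5^k\mathbb{Z}))$ in $\GL_2(\mathbb{Z}_5)$.

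Next I would carry out an orbit-stabilizer calculation at level $5^k$. Choosing a $\mathbb{Z}/5^k\mathbb{Z}$-basis of $E[5^k]$ whose first vector generates $C$, the stabilizer of $C$ in $\GL_2(\mathbb{Z}/5^k\mathbb{Z})$ is the Borel subgroup $B(5^k)$ of upper triangular matrices. Standard counting gives $|\GL_2(\mathbb{Z}/5^k\mathbb{Z})| = 96 \cdot 5^{4k-3}$ and $|B(5^k)| = 16 \cdot 5^{3k-2}$. Since
\[
[\mathbb{Q}(C) : \mathbb{Q}] \,=\, [G_k : G_k \cap B(5^k)] \,=\, \frac{|G_k|}{|G_k \cap B(5^k)|},
\]
taking $5$-adic valuations and combining $v_5(|G_k|) = (4k-3) - v_5([\GL_2(\mathbb{Z}/5^k\mathbb{Z}) : G_k]) \geq 4k-4$ with $v_5(|G_k \cap B(5^k)|) \leq v_5(|B(5^k)|) = 3k - 2$ yields
\[
v_5([\mathbb{Q}(C) : \mathbb{Q}]) \,\geq\, (4k-4) - (3k-2) \,=\, k - 2.
\]
Hence $5^{k-2} \mid [\mathbb{Q}(C) : \mathbb{Q}]$, and so $[K : \mathbb{Q}] \geq [\mathbb{Q}(C) : \mathbb{Q}] \geq 5^{k-2}$.

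The main subtlety lies in the second case of Green's theorem: there the $5$-adic representation need not be defined modulo $5$ (let alone modulo $25$), so the clean iterated application of Lemma \ref{definedP} used for $p \geq 7$ in Proposition \ref{7iso} is not available for descending from $C$ all the way to a rational order-$5$ subgroup. The orbit-counting argument sidesteps this obstacle by only invoking the global $5$-adic index from Theorem \ref{isog5} and treating both branches of the dichotomy uniformly.
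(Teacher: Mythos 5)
Your argument is correct, and it reaches the conclusion by a genuinely different route than the paper. The paper's proof runs through a case analysis on the $\mathbb{Q}$-isogeny class: it distinguishes whether some curve in the class has two independent $5$-isogenies and, if so, whether the cyclic isogeny $E \to E'$ has degree divisible by $5$; in each branch it combines the index statement of Theorem \ref{isog5} with an explicit bound on $\#\mathrm{Im}(\rho_{E,5})$ or $\#\mathrm{Im}(\rho_{E,25})$ to show that $\ker(\pi_5)$ or $\ker(\pi_{25})$ lies inside $\mathrm{Im}(\rho_{E,5^\infty})$, i.e.\ that the $5$-adic representation is defined modulo $5$ or modulo $25$, and then iterates Lemma \ref{definedP}. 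You instead extract from both branches of Theorem \ref{isog5} only the single fact $v_5\bigl([\GL_2(\mathbb{Z}_5) : \mathrm{Im}(\rho_{E,5^\infty})]\bigr) \leq 1$, push it down to level $5^k$ (correctly, since $[\GL_2(\mathbb{Z}/5^k\mathbb{Z}) : G_k]$ divides the $5$-adic index), and convert it into a lower bound on the orbit of $C$ by orbit--stabilizer and a valuation count; your group orders $96\cdot 5^{4k-3}$ and $16\cdot 5^{3k-2}$ and the identification of the stabilizer of $C$ with the Borel are all right. Your version is shorter and uniform, avoiding both the isogeny-class case split and the ``defined modulo $5^m$'' machinery; the paper's version is longer but yields the sharper bound $5^{k-1}$ except in the subcase $5 \mid d$, and keeps the argument in the same ``defined modulo $p^m$ plus Lemma \ref{definedP}'' framework used for $p=7$ and $p=3$ elsewhere. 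For the application (forcing $k \leq 2$ over quadratic fields) the uniform bound $5^{k-2}$ is all that is needed, so nothing is lost.
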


\begin{proof}

Clearly, $E$ has a cyclic $5^k$-isogeny defined over $K$. If the $\mathbb{Q}$-isogeny class of $E$ does not contain a curve with two independent $5$-isogenies, we can use theorem \ref{isog5} to conclude that the image of $\rho_{E, 5^{\infty}}$ contains a Sylow pro-$5$ subgroup of $\GL_2(\mathbb{Z}_5)$. We can conclude that $\rho_{E, 5^{\infty}}$ is defined modulo $5$ (similar as in Proposition \ref{7iso}), so we can use Lemma \ref{definedP} to get that if $C$ is a cyclic subgroup of $E(\overline{\mathbb{Q}})$ of order $5^k$ ($k \geq 2$), then $[\mathbb{Q}(C) : \mathbb{Q}(5C)] = 5$. Therefore, if the $\mathbb{Q}$-isogeny class of $E$ does not contain a curve with two independent $5$-isogenis, any cyclic $5^k$-isogeny is defined over the number field of degree at least $5^{k-1}$ so $[K : \mathbb{Q}] \geq 5^{k-1}$.

Now assume that the $\mathbb{Q}$-isogeny class of $E$ contains a curve $E'$ with two independent $5$-isogenies. Recall the fact that there is a cyclic isogeny $\phi: E \mapsto E'$ of a unique degree $d$, since they are non-CM curves. We consider two cases, depending on whether $5 \mid d$.

\textbf{Case $5 \nmid d$:} First we show that the images of $\rho_{E, 5}$ and $\rho_{E', 5}$ are the same, up to conjugation. Let $\{P, Q\}$ be a basis for $E[5]$. Then $\{\phi(P), \phi(Q)\}$ is a basis for $E'[5]$ since $5 \nmid d$. Notice that if $P^{\sigma} = aP + bQ$ for $\sigma \in G_{\mathbb{Q}}$, then, since $\phi$ is defined over $\mathbb{Q}$, we have $\phi(P)^{\sigma} = \phi(P^{\sigma}) = a\phi(P) + b\phi(Q)$. We get the analogous result for $Q$ so we get that $\rho_{E, 5}(\sigma) = \rho_{E', 5}(\sigma)$. Therefore, the images of $\rho_{E, 5}$ and $\rho_{E', 5}$ are the same, up to conjugation. This means that $E$ also contains two independent $5$-isogenies, so $Im(\rho_{E, 5})$ consists of diagonal matrices (up to conjugation). The subgroup of diagonal matrices in $\GL_2(\mathbb{Z} /5\mathbb{Z})$ has $16$ elements, so $\#Im(\rho_{E, 5}) \mid 16$. Let $\pi_5 : \GL_2(\mathbb{Z}_5) \mapsto \GL_2(\mathbb{Z}/5\mathbb{Z})$ be the mod $5$ reduction. We have from the first isomorphism theorem:
\begin{align*}
    &[Im(\rho_{E, 5^{\infty}}) : Im(\rho_{E, 5^{\infty}}) \cap ker(\pi_5)] = \# Im(\rho_{E, 5}), \\
    &[\GL_2(\mathbb{Z}_5) : ker(\pi_5)] = \# \GL_2(\mathbb{Z}/5\mathbb{Z}) = 480.
\end{align*}
We know that $Im(\rho_{E, 5^{\infty}}) \cap ker(\pi_5)$ is of finite index in $Im(\rho_{E, 5^{\infty}})$ and that $Im(\rho_{E, 5^{\infty}})$ is of finite index in $\GL_2(\mathbb{Z}_5)$ due to Serre's open image theorem. Therefore, $Im(\rho_{E, 5^{\infty}}) \cap ker(\pi_5)$ is of finite index in $\GL_2(\mathbb{Z}_5)$ and consequently in $ker(\pi_5)$ too. The group $ker(\pi_5)$ is a pro-$5$ group, so any of its subgroups of finite index has index which is a power of $5$ (see \cite[Theorem 1]{proP}). Now we have for some $m \geq 0$:
\begin{align*}
    &[\GL_2(\mathbb{Z}_5) : Im(\rho_{E, 5^{\infty}}) \cap ker(\pi_5)] = [\GL_2(\mathbb{Z}_5) : ker(\pi_5)][ker(\pi_5) : Im(\rho_{E, 5^{\infty}}) \cap ker(\pi_5)] = 480 \cdot 5^m
\end{align*}
We also have:
\begin{align*}
&480 \cdot 5^m = [\GL_2(\mathbb{Z}_5) : Im(\rho_{E, 5^{\infty}}) \cap ker(\pi_5)] =\\
&= [\GL_2(\mathbb{Z}_5) : Im(\rho_{E, 5^{\infty}})][Im(\rho_{E, 5^{\infty}}) : Im(\rho_{E, 5^{\infty}}) \cap ker(\pi_5)] =\\
&= \#Im(\rho_{E, 5}) \cdot [\GL_2(\mathbb{Z}_5) : Im(\rho_{E, 5^{\infty}})].
\end{align*}
We know that $\#Im(\rho_{E, 5}) \mid 16$ and from theorem \ref{isog5} we know that $25 \nmid [\GL_2(\mathbb{Z}_5) : Im(\rho_{E, 5^{\infty}})]$. Therefore, $m = 0$, so $ker(\pi_5) \leq Im(\rho_{E, 5^{\infty}})$. This means that $\rho_{E, 5^{\infty}}$ is defined modulo $5$. We can now use Lemma \ref{definedP} like before and we get that cyclic $5^k$-isogeny is defined over the number field of degree at least $5^{k-1}$ so $[K : \mathbb{Q}] \geq 5^{k-1}$.

\textbf{Case $5 \mid d$:} Our first step in this case is to compose $\phi$ with one of the rational $5$-isogenies on the curve $E'$ such that the composition is still a cyclic isogeny. Assume $\{P, Q\}$ is the basis for $E[5]$ such that $\phi(P) = O_{E'}$. Then $\phi(Q) \neq O_{E'}$ since $\phi$ is cyclic. Also, $\phi(Q)$ is of order $5$. At least one of the two independent rational $5$-isogenies of $E'$ will not have $\phi(Q)$ in its kernel, call it $\alpha$. Then the composition $\alpha \circ \phi$ will not have $Q$ in its kernel, so $\alpha \circ \phi$ is cyclic and has degree $5d$. Also notice that $\alpha \circ \phi$ is defined over $\mathbb{Q}$ and $25 \mid 5d$ so we must have $5d = 25$. This means that $E$ has a cyclic $25$-isogeny defined over $\mathbb{Q}$. Therefore, $Im(\rho_{E, 25})$ consists of upper-triangular matrices (up to conjugation). The subgroup of upper-triangluar matrices in $\GL_2(\mathbb{Z}/25\mathbb{Z})$ has $10000$ elements, so $\#Im(\rho_{E, 25}) \mid 2^4\cdot 5^4$.

Now we proceed very similar to the case $5 \nmid d$. We set $\pi_{25} : \GL_2(\mathbb{Z}_5) \mapsto \GL_2(\mathbb{Z}/25\mathbb{Z})$ to be the mod $25$ reduction map. We have from the first isomorphism theorem:
\begin{align*}
    &[Im(\rho_{E, 5^{\infty}}) : Im(\rho_{E, 5^{\infty}}) \cap ker(\pi_{25})] = \# Im(\rho_{E, 25}), \\
    &[\GL_2(\mathbb{Z}_5) : ker(\pi_{25})] = \# \GL_2(\mathbb{Z}/25\mathbb{Z}) = 5^4 \cdot 480.
\end{align*}
We know that $Im(\rho_{E, 5^{\infty}}) \cap ker(\pi_{25})$ is of finite index in $Im(\rho_{E, 5^{\infty}})$ and that $Im(\rho_{E, 5^{\infty}})$ is of finite index in $\GL_2(\mathbb{Z}_5)$ due to Serre's open image theorem. Therefore, $Im(\rho_{E, 5^{\infty}}) \cap ker(\pi_{25})$ is of finite index in $\GL_2(\mathbb{Z}_5)$ and consequently in $ker(\pi_{25})$ too. The group $ker(\pi_{25})$ is a subgroup of $ker(\pi_{5})$ of finite index ($5^4$), hence $Im(\rho_{E, 5^{\infty}}) \cap ker(\pi_{25})$ is a subgroup of $ker(\pi_{5})$ of finite index. Since $ker(\pi_{5})$ is a pro-$5$ group, any its subgroup of finite index has index which is a power of $5$ (see \cite[Theorem 1]{proP}). Hence the index $[ker(\pi_{25}) : Im(\rho_{E, 5^{\infty}}) \cap ker(\pi_{25})]$ is also a power of $5$. Now we have for some $m \geq 0$:
\begin{align*}
    &[\GL_2(\mathbb{Z}_5) : Im(\rho_{E, 5^{\infty}}) \cap ker(\pi_{25})] =\\
    &=[\GL_2(\mathbb{Z}_5) : ker(\pi_{25})][ker(\pi_{25}) : Im(\rho_{E, 5^{\infty}}) \cap ker(\pi_{25})] = 5^4 \cdot 480 \cdot 5^m
\end{align*}
We also have:
\begin{align*}
    &5^4 \cdot 480 \cdot 5^m = [\GL_2(\mathbb{Z}_5) : Im(\rho_{E, 5^{\infty}}) \cap ker(\pi_{25})] =\\
    &= [\GL_2(\mathbb{Z}_5) : Im(\rho_{E, 5^{\infty}})][Im(\rho_{E, 5^{\infty}}) : Im(\rho_{E, 5^{\infty}}) \cap ker(\pi_{25})] =\\
    &= \#Im(\rho_{E, 25}) \cdot [\GL_2(\mathbb{Z}_5) : Im(\rho_{E, 5^{\infty}})].
\end{align*}
We know that $\#Im(\rho_{E, 25}) \mid 2^4 \cdot 5^4$ and from theorem \ref{isog5} we know that $25 \nmid [\GL_2(\mathbb{Z}_5) : Im(\rho_{E, 5^{\infty}})]$. Therefore, $m = 0$, so $ker(\pi) \leq Im(\rho_{E, 5^{\infty}})$. This means that $\rho_{E, 5^{\infty}}$ is defined modulo $25$. We can now use Lemma \ref{definedP} like before and we get that cyclic $5^k$-isogeny is defined over the number field of degree at least $5^{k-2}$ so $[K : \mathbb{Q}] \geq 5^{k-2}$. This completes the proof.
\end{proof}

Next we consider the situation when $E$ doesn't have a rational $5$-isogeny.

\begin{lemma} \label{no5iso}
Let $E/ \mathbb{Q}$ be a non-CM elliptic curve which doesn't have a $5$-isogeny defined over $\mathbb{Q}$. Then a cyclic $25$-isogeny of $E$ is defined over the number field of degree at least $6$.
\end{lemma}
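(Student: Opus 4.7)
Let $C \subseteq E[25]$ be any cyclic subgroup of order $25$ and set $C' := 5C \subseteq E[5]$, so that $\mathbb{Q}(C') \subseteq \mathbb{Q}(C)$ and any lower bound on $[\mathbb{Q}(C'):\mathbb{Q}]$ descends to $[\mathbb{Q}(C):\mathbb{Q}]$. Since $E$ has no rational $5$-isogeny, subsection \ref{data5} tells us that, up to conjugation, the image of $\rho_{E,5}$ is one of four things: the whole $\GL_2(\mathbb{F}_5)$, a subgroup of $N_{ns}(5)$, a subgroup of $G_9$, or a subgroup of $N_s(5)$. The plan is to handle the first three cases by the mod-$5$ degree bounds already collected in Section \ref{basicRes}, and to reduce the $N_s(5)$ case to a finite Magma enumeration in the style of Proposition \ref{7iso}.

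In the surjective case Lemma \ref{surjective} gives $[\mathbb{Q}(C'):\mathbb{Q}] = 6$; in the $N_{ns}(5)$ and $G_9$ cases the argument of subsection \ref{data5} (combining $[\mathbb{Q}(P):\mathbb{Q}] = 24$ from \cite[Table 2]{najGrowth} with Lemma \ref{interdeg}) gives $[\mathbb{Q}(C'):\mathbb{Q}] \geq 6$. Either way $[\mathbb{Q}(C):\mathbb{Q}] \geq [\mathbb{Q}(C'):\mathbb{Q}] \geq 6$, so there is nothing more to prove in these three subcases.

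The only case needing new work is when the image of $\rho_{E,5}$ sits inside $N_s(5)$: here $C'$ may already be defined over a quadratic or quartic field, so a lower bound on $[\mathbb{Q}(C'):\mathbb{Q}]$ is no longer enough, and we must analyze $\rho_{E,25}$ directly. Using Magma, I would enumerate, up to conjugation, every subgroup $H \leq \GL_2(\mathbb{Z}/25\mathbb{Z})$ whose reduction modulo $5$ is contained in $N_s(5)$ but not in $B(5)$, and which contains every scalar matrix congruent to the identity modulo $5$; the latter condition is forced on the image of $\rho_{E,5^{\infty}}$ for any non-CM $E/\mathbb{Q}$ by \cite[Theorem 3.16]{lombTron}, exactly as in Proposition \ref{7iso}. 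For each surviving $H$, compute its orbits on the $30$ cyclic subgroups of order $25$ in $(\mathbb{Z}/25\mathbb{Z})^2$ and verify that every orbit has size at least $6$.

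The main obstacle will be those candidates $H$ conjugate into $N_s(25)$, the natural lift of $N_s(5)$ to $\GL_2(\mathbb{Z}/25\mathbb{Z})$: an explicit computation shows that under $N_s(25)$ the ``vertical'' cyclic subgroup $\langle (1,0) \rangle$ of $E[25]$ sits in an orbit of size $2$, so Magma alone will not eliminate them. A non-CM $E/\mathbb{Q}$ realizing such an image would correspond to a non-cuspidal non-CM rational point on $X_s(25) \cong X_0^+(5^4)$, so these groups are to be ruled out via a rational-points result for $X_0^+(5^r)$ analogous to \cite[Theorem 3.14]{momoShim}, used for $p = 7$ in Proposition \ref{7iso}. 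For every remaining candidate $H$, which reduces to $N_s(5)$ mod $5$ but is not conjugate into $N_s(25)$, the scalar condition together with the order of $H$ should force all orbits on cyclic $25$-subgroups to have size at least $6$, which is then verified directly in Magma.
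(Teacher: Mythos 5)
Your proposal follows essentially the same route as the paper: dispose of the surjective, $N_{ns}(5)$ and $G_9$ cases via the degree bounds of subsection \ref{data5}, then enumerate in \texttt{Magma} the subgroups of $\GL_2(\mathbb{Z}/25\mathbb{Z})$ lifting the split-Cartan-normalizer images, filter by the scalar condition from \cite[Theorem 3.16]{lombTron}, bound orbit lengths on cyclic order-$25$ subgroups, and eliminate the subgroups conjugate into $N_s(25)$ via $X_s(25) \cong X_0^+(5^4)$ and Momose's theorem. The paper's computation confirms your expectations (the surviving non-Cartan groups have orbit lengths $10$ and $20$), so the argument is correct and matches the published proof.
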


\begin{proof}
We know from subsection \ref{data5} that if the image of $\rho_{E,5}$ is surjective, conjugate to $N_{ns}(5)$ or to $G_9$ from \cite[Theorem 1.4]{reprZywina}, then any $5$-isogeny (and hence cyclic $25$-isogeny) is defined over the number field of degree at least $6$. The only remaining possible images of $\rho_{E,5}$ such that $E$ doesn't have a $5$-isogeny defined over $\mathbb{Q}$ are $N_s(5)$ and one of its subgroups ($G_3$ from \cite[Theorem 1.4]{reprZywina}). We eliminate these using \texttt{Magma} the same way we did in Proposition \ref{7iso}: we check all subgroups of $\GL_2(\mathbb{Z}/25\mathbb{Z})$ and select only those which reduce modulo $5$ to $N_s(5)$ or $G_3$, all up to conjugation. Those are the possible images of $\rho_{E, 25}$. Similarly to the proof of Proposition \ref{7iso}, for each possible subgroup $H$, one of the following happens:

\begin{enumerate}
    \item Group $H$ does not contain all scalars congruent to $1$ modulo $5$, a contradiction with \cite[Theroem 3.16]{lombTron}.
    \item Orbit lengths of cyclic subgroups of $E[25]$ of order $25$ under the action of $H$ are $10$ and $20$, so a cyclic $25$-isogeny is defined over the number field of degree at least $10$.
    \item Group $H$ is conjugate to a subgroup of $N_s(25)$.
\end{enumerate}

 We can conclude that the last case is impossible by again using $X_{s}(5^2) \cong X_0^+(5^4)$ (via a $\mathbb{Q}$-isomorphism) and \cite[Theroem 3.14]{momoShim} like in the end of the proof of Proposition \ref{7iso}. This tells us that all rational points on $X_0^+(5^4)$, and hence also on $X_{s}(5^2)$, are cusps or CM points. This completes the proof. 
\end{proof}

\begin{proposition} \label{5isoFinal}
Let $E/ \mathbb{Q}$ be a non-CM elliptic curve with a cyclic $n$-isogeny defined over a quadratic number field $K$. Assume that $5^k \mid n$. Then $k \leq 2$.
\end{proposition}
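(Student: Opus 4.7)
The plan is to reduce the proposition to a straightforward case split based on whether $E$ admits a $\mathbb{Q}$-rational $5$-isogeny, invoking the two preceding lemmas. Since $E$ has a cyclic $n$-isogeny defined over the quadratic field $K$ with $5^k \mid n$, the $(n/5^k)$-multiples of the kernel form a cyclic $G_K$-stable subgroup of order $5^k$, so $E$ has a cyclic $5^k$-isogeny defined over $K$. We may assume $k \geq 2$, as otherwise the conclusion is immediate.

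In the first case, I would assume $E$ admits a $5$-isogeny defined over $\mathbb{Q}$. Then Lemma \ref{yes5iso} applies to the cyclic $5^k$-isogeny over $K$ and gives $[K:\mathbb{Q}] \geq 5^{k-2}$. Since $[K:\mathbb{Q}] = 2 < 5$, this forces $5^{k-2} \leq 2$, so $k - 2 \leq 0$ and $k \leq 2$.

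In the second case, I would assume $E$ has no $\mathbb{Q}$-rational $5$-isogeny. From the cyclic $5^k$-isogeny over $K$ (with $k \geq 2$) one extracts a cyclic $25$-isogeny defined over $K$ by taking $5^{k-2}$ times the kernel. But then Lemma \ref{no5iso} forces this isogeny to be defined over a field of degree at least $6$, contradicting $[K:\mathbb{Q}] = 2$. Hence this case cannot occur when $k \geq 2$, completing the argument.

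There is no real obstacle here; the argument is essentially a bookkeeping step assembling Lemmas \ref{yes5iso} and \ref{no5iso} against the constraint $[K:\mathbb{Q}] = 2$. All of the substantive work, namely controlling the image of $\rho_{E,5^\infty}$ via Theorem \ref{isog5} and eliminating the residual $N_s(5)$-type images via the identification $X_s(5^2) \cong X_0^+(5^4)$, has already been carried out in the preceding two lemmas.
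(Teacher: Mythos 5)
Your proposal is correct and matches the paper's proof, which simply states that the proposition follows directly from Lemmas \ref{yes5iso} and \ref{no5iso}; you have just spelled out the same case split (rational $5$-isogeny or not) and the same numerical comparisons against $[K:\mathbb{Q}]=2$ that the paper leaves implicit.
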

\begin{proof}
This follows directly from Lemmas \ref{yes5iso} and \ref{no5iso}.
\end{proof}

\section{Isogenies of degree $3^k$}

We will now consider isogenies divisible by powers of $3$. To begin with, we will prove a simple group-theoretic Lemma to make later proofs easier:

\begin{lemma} \label{grpLemma}
Let $G$ be a group and $H$, $L$ its subgroups such that $[G : L] \leq 2$. Then $[H : H \cap L] \leq 2$.
\end{lemma}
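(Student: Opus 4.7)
The plan is to reduce the claim to the standard fact that, for any subgroups $H, L$ of a group $G$, the index $[H : H \cap L]$ is bounded above by $[G : L]$ (a Poincaré-style inequality on indices). Once that is in hand, the lemma follows immediately from the hypothesis $[G : L] \leq 2$.

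To establish the inequality, I would define the map
\[
\varphi : H/(H \cap L) \longrightarrow G/L, \qquad h(H \cap L) \longmapsto hL,
\]
between the sets of left cosets (no normality is needed to treat them just as sets). First I would check that $\varphi$ is well-defined: if $h_1(H \cap L) = h_2(H \cap L)$, then $h_1^{-1}h_2 \in H \cap L \subseteq L$, so $h_1 L = h_2 L$. Next I would check that $\varphi$ is injective: if $h_1 L = h_2 L$, then $h_1^{-1}h_2 \in L$, and since $h_1, h_2 \in H$, also $h_1^{-1}h_2 \in H$, hence $h_1^{-1}h_2 \in H \cap L$, so the cosets $h_1(H \cap L)$ and $h_2(H \cap L)$ coincide. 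Injectivity yields $[H : H \cap L] \leq [G : L] \leq 2$, which is exactly the conclusion.

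An alternative presentation, which I would mention briefly, uses the fact that a subgroup of index at most $2$ is automatically normal, so $L \trianglelefteq G$; then the second isomorphism theorem gives $H/(H \cap L) \cong HL/L \leq G/L$, and $|G/L| \leq 2$ forces $[H : H \cap L] \leq 2$. Both routes are elementary.

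There is no real obstacle here: the statement is a textbook fact, and the only care needed is to verify that the coset map $\varphi$ is well-defined and injective without assuming normality of $L$ in $G$ (although in our case normality does hold for free). I would favor the coset-map proof for concreteness, since it requires no appeal to normality or to the isomorphism theorems.
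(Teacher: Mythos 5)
Your proof is correct. Your primary route—the injective coset map $\varphi : H/(H\cap L) \to G/L$, $h(H\cap L) \mapsto hL$—differs from the paper's argument, which first disposes of the trivial cases $[G:L]=1$ and $H \leq L$, then uses that an index-$2$ subgroup is normal to invoke the second isomorphism theorem, obtaining $H/(H\cap L) \cong HL/L = G/L$. Your coset-map argument is more general: it proves the Poincar\'e-style inequality $[H : H\cap L] \leq [G:L]$ for arbitrary subgroups with no normality hypothesis, and the lemma drops out as the special case $[G:L] \leq 2$. The paper's route buys a one-line conclusion from a standard isomorphism theorem but relies on the normality of $L$, which is only available because the index is at most $2$; yours avoids that dependence entirely and is, as you say, purely a statement about sets of cosets. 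You also correctly note the paper's approach as your alternative presentation, so the two proofs are in full agreement on substance. Both verifications (well-definedness and injectivity of $\varphi$) are carried out correctly.
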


\begin{proof}
If $[G : L] = 1$ then $G = L$ and $[H : H \cap L] = [H : H] = 1$. The same holds if $H \leq L$. Now assume $[G : L] = 2$ and $H \nsubseteq L$. Then $L \trianglelefteq G$. Hence we can use the second isomorphism theorem saying that $(HL)/L \cong H/(H \cap L)$. Since $H \nsubseteq L$ and there are only two $L$-cosets in $G$, we know that $HL = G$. Therefore, $H/(H \cap L) \cong G/L$, so $[H : H \cap L] \leq 2$.
\end{proof}

First we consider the situation when $E$ has a rational $3$-isogeny.

\begin{lemma} \label{yes3iso}
Let $E/ \mathbb{Q}$ be a non-CM elliptic curve with a cyclic $n$-isogeny defined over a number field $K$ such that $3^k \mid n$ with $k \geq 2$. Assume $E$ also has a $3$-isogeny defined over $\mathbb{Q}$. Then $[K : \mathbb{Q}] \geq 3^{k-2}$.
\end{lemma}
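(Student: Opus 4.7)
The plan is to mirror the proof of Lemma \ref{yes5iso} essentially verbatim, with the prime $3$ (and modulus $9$) replacing the prime $5$ (and modulus $25$) at every step. I shall sketch only the differences.

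First I would invoke the $3$-adic analog of Theorem \ref{isog5}: if $E/\mathbb{Q}$ has a rational $3$-isogeny and no curve in its $\mathbb{Q}$-isogeny class admits two independent $3$-isogenies, then $\mathrm{Im}(\rho_{E,3^\infty})$ contains a Sylow pro-$3$ subgroup of $\GL_2(\mathbb{Z}_3)$; otherwise, $9 \nmid [\GL_2(\mathbb{Z}_3) : \mathrm{Im}(\rho_{E,3^\infty})]$. (Such a statement is available in \cite{greenIsog} at the prime $3$, and is the exact input that made the $p=5$ proof work.) In the first subcase, the Sylow containment immediately implies that $\rho_{E,3^\infty}$ is defined modulo $3$ in the sense of Definition \ref{definedModP}, so an iterated application of Lemma \ref{definedP} to the chain $C \supset 3C \supset 9C \supset \cdots$ yields $[K : \mathbb{Q}] \geq 3^{k-1} \geq 3^{k-2}$.

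Now suppose some curve $E'$ in the isogeny class admits two independent rational $3$-isogenies, and let $\phi : E \to E'$ be the unique cyclic isogeny of some degree $d$. If $3 \nmid d$, then $\{\phi(P), \phi(Q)\}$ remains a basis of $E'[3]$ for any basis $\{P,Q\}$ of $E[3]$, and since $\phi$ is defined over $\mathbb{Q}$ the images $\rho_{E,3}$ and $\rho_{E',3}$ coincide up to conjugation; so $E$ itself has two independent rational $3$-isogenies and $\mathrm{Im}(\rho_{E,3})$ is contained in the diagonal subgroup of $\GL_2(\mathbb{F}_3)$, of order $4$. The identical index chase using $\pi_3 : \GL_2(\mathbb{Z}_3) \to \GL_2(\mathbb{F}_3)$, the fact that $\ker(\pi_3)$ is pro-$3$ (so every finite-index subgroup has index a power of $3$), and the hypothesis $9 \nmid [\GL_2(\mathbb{Z}_3) : \mathrm{Im}(\rho_{E,3^\infty})]$ forces $\ker(\pi_3) \leq \mathrm{Im}(\rho_{E,3^\infty})$. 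Thus $\rho_{E,3^\infty}$ is defined modulo $3$ and Lemma \ref{definedP} gives $[K:\mathbb{Q}] \geq 3^{k-1}$.

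If instead $3 \mid d$, I would run the composition trick from the $5 \mid d$ subcase of Lemma \ref{yes5iso}: pick a basis $\{P,Q\}$ of $E[3]$ with $\phi(P) = O_{E'}$, then $\phi(Q)$ has order $3$, and at least one of the two independent rational $3$-isogenies $\alpha$ of $E'$ has $\phi(Q)$ outside its kernel. The composition $\alpha \circ \phi$ is then $\mathbb{Q}$-rational, cyclic, and of degree $3d$; cyclicity combined with $9 \mid 3d$ forces $3d = 9$, so $E$ admits a rational cyclic $9$-isogeny and $\mathrm{Im}(\rho_{E,9})$ sits (up to conjugation) in the upper-triangular subgroup of $\GL_2(\mathbb{Z}/9\mathbb{Z})$, of order $2^2 \cdot 3^4$. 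The same index chase using $\pi_9 : \GL_2(\mathbb{Z}_3) \to \GL_2(\mathbb{Z}/9\mathbb{Z})$ (whose codomain has order $2^4 \cdot 3^5$) together with $9 \nmid [\GL_2(\mathbb{Z}_3) : \mathrm{Im}(\rho_{E,3^\infty})]$ shows $\ker(\pi_9) \leq \mathrm{Im}(\rho_{E,3^\infty})$, so $\rho_{E,3^\infty}$ is defined modulo $9$ and Lemma \ref{definedP} yields $[K:\mathbb{Q}] \geq 3^{k-2}$, as claimed.

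The main obstacle I anticipate is securing the correct $3$-adic analog of Theorem \ref{isog5}: everything else is formal once the bound $9 \nmid [\GL_2(\mathbb{Z}_3) : \mathrm{Im}(\rho_{E,3^\infty})]$ is available in the presence of two independent $3$-isogenies in the class. If this cleaner dichotomy is not directly on the shelf, one can fall back on classifying the finitely many possible mod-$9$ images of a non-CM $E/\mathbb{Q}$ with a rational $3$-isogeny (e.g.\ via the Rouse–Zureick-Brown style database or the results of Zywina on $\GL_2(\mathbb{Z}/9\mathbb{Z})$) and extracting the Sylow-containment case by case; the numerical arithmetic of upper-triangular and diagonal subgroups of $\GL_2(\mathbb{Z}/3^j\mathbb{Z})$ is what drives the final exponent $k-2$.
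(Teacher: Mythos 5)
There is a genuine gap at the very first step: the $3$-adic analogue of Theorem \ref{isog5} that your whole argument hangs on is not available. Greenberg's results in \cite{greenIsog} concern $p \geq 7$ together with a separate treatment of $p=5$ (that treatment is precisely the Theorem \ref{isog5} quoted in the paper); the prime $3$ is not covered there, and in fact the conclusion your index chase would deliver is false at $3$. Your argument ends, in every branch, with ``$\rho_{E,3^{\infty}}$ is defined modulo $9$.'' But the classification of $3$-adic images of non-CM curves with a rational $3$-isogeny --- \cite[Corollary 1.3.1.]{troglavaNeman}, which is what the paper actually invokes --- leaves one admissible group of level $27$, i.e.\ a possible image that does \emph{not} contain $\ker\bigl(\GL_2(\mathbb{Z}_3) \to \GL_2(\mathbb{Z}/9\mathbb{Z})\bigr)$. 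So the Greenberg-style dichotomy cannot hold verbatim at $p=3$, and an argument modelled verbatim on Lemma \ref{yes5iso} would silently miss this exceptional case.

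Your fallback --- classifying the possible $3$-adic images via the Rouse--Sutherland--Zureick-Brown data --- is essentially the paper's route, but two further ingredients are needed to carry it out. First, that classification constrains $\langle \mathrm{Im}(\rho_{E,3^{\infty}}), -I\rangle$ rather than the image itself, so one must pass from ``$\ker(\pi_9)$ lies in the group generated by the image and $-I$'' to ``$\ker(\pi_9)$ lies in the image''; the paper does this with the elementary Lemma \ref{grpLemma} (giving index at most $2$) combined with the fact that $\ker(\pi_3)$ is pro-$3$, so that index must be a power of $3$ and hence equal to $1$. Second, the exceptional level-$27$ group must be handled separately: the paper takes its generators from \cite[Table 1]{lAdicLaki}, computes that the orbit lengths of its action on the cyclic order-$27$ subgroups of $E[27]$ are $3$, $6$ and $27$ (so a cyclic $27$-isogeny already forces a cubic field), and only then applies Lemma \ref{definedP} from level $27$ upward to recover the bound $3^{k-2}$. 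Without these two steps your proof of Lemma \ref{yes3iso} is incomplete.
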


\begin{proof}
Clearly, $E$ has a cyclic $3^k$-isogeny defined over $K$. If $E$ has a $3$-isogeny defined over $\mathbb{Q}$, we can use the \cite[Corollary 1.3.1.]{troglavaNeman}. Notice that it tells us that $\langle Im(\rho_{E, 3^{\infty}}), -I \rangle$ is of level at most $9$, except for one case which we will solve later. For now assume $\langle Im(\rho_{E, 3^{\infty}}), -I \rangle$ is of level at most $9$. Let $\pi_9 : \GL_2(\mathbb{Z}_3) \mapsto \GL_2(\mathbb{Z}/9\mathbb{Z})$ and $\pi_3 : \GL_2(\mathbb{Z}_3) \mapsto \GL_2(\mathbb{Z}/3\mathbb{Z})$ be the mod $9$ and mod $3$ reductions. We have that $ker(\pi_9) \leq \langle Im(\rho_{E, 3^{\infty}}), -I \rangle$. We can use Lemma \ref{grpLemma} with $G := \langle Im(\rho_{E, 3^{\infty}}), -I \rangle$, $H := ker(\pi_9)$, $L := Im(\rho_{E, 3^{\infty}})$ to get that $[ker(\pi_9) : ker(\pi_9) \cap Im(\rho_{E, 3^{\infty}})] \leq 2$. The group $ker(\pi_9)$ is a subgroup of $ker(\pi_3)$ of finite index ($3^4$), hence $ker(\pi_9) \cap Im(\rho_{E, 3^{\infty}})$ is a subgroup of $ker(\pi_{3})$ of finite index. Since $ker(\pi_{3})$ is a pro-$3$ group, any subgroup of finite index has index which is a power of $3$ (see \cite[Theorem 1]{proP}). Hence, $[ker(\pi_9) : ker(\pi_9) \cap Im(\rho_{E, 3^{\infty}})] = 1$. Therefore, $ker(\pi_9) \leq Im(\rho_{E, 3^{\infty}})$, so $\rho_{E, 3^{\infty}}$ is defined modulo $9$. Now we can use Lemma \ref{definedP} to see that if $E$ has a cyclic $3^k$-isogeny defined over $K$ for $k \geq 2$, then $[K : \mathbb{Q}] \geq 3^{k-2}$.

Recall that there is still one group of level $27$ that $\langle Im(\rho_{E, 3^{\infty}}), -I \rangle$ can be conjugate to. We can find its generators in \cite[Table 1]{lAdicLaki}. Either $Im(\rho_{E, 3^{\infty}})$ contains $-I$ and is therefore equal to the mentioned group, or it is a subgroup of index $2$ which doesn't contain $-I$. Using \texttt{Magma}, we see that orbit lengths of cyclic subgroups of $E[27]$ of order $27$ are $3, 6, 27$ in all cases, so cyclic $27$-isogeny is defined over the field of degree at least $3$. Since $\rho_{E, 3^{\infty}}$ is defined modulo $27$ in this case, we can again use Lemma \ref{definedP} and conclude that if $K$ is the field of definition of some cyclic $3^k$-isogeny with $k \geq 2$, we have $[K : \mathbb{Q}] \geq 3^{k-2}$. This completes the proof.
\end{proof}

Now we consider the situation when $E$ doesn't have a rational $3$-isogeny.

\begin{lemma} \label{no3iso}
Let $E/ \mathbb{Q}$ be a non-CM elliptic curve which doesn't have a $3$-isogeny defined over $\mathbb{Q}$. Then a cyclic $9$-isogeny of $E$ is defined over the number field of degree at least $4$.
\end{lemma}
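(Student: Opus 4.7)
The plan is to mirror the case analysis used for $p=7$ in Proposition \ref{7iso} and for $p=5$ in Lemma \ref{no5iso}, adapted to $p=3$. By subsection \ref{data3}, the hypothesis that $E$ has no rational $3$-isogeny leaves three possibilities for the image of $\rho_{E,3}$: surjective, conjugate to a subgroup of $N_{ns}(3)$, or conjugate to a subgroup of $N_s(3)$ not contained in $B(3)$.

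In the first two cases the conclusion is immediate. Lemmas \ref{surjective} and \ref{nonsplit} give that any cyclic $3$-isogeny of $E$ has field of definition of degree at least $4$ over $\mathbb{Q}$. A cyclic subgroup $C$ of order $9$ contains the cyclic subgroup $3C$ of order $3$, so the field of definition $K$ of $C$ contains the field of definition of $3C$, and hence $[K:\mathbb{Q}] \geq 4$.

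The substantive case is $\rho_{E,3}$ conjugate to a subgroup of $N_s(3) \setminus B(3)$, where the mod-$3$ image only gives a $3$-isogeny over a quadratic field and we must extract an additional factor of $2$ from the mod-$9$ representation. Following Proposition \ref{7iso} and Lemma \ref{no5iso}, I would enumerate in \texttt{Magma} every conjugacy class of subgroup $H \leq \GL_2(\mathbb{Z}/9\mathbb{Z})$ whose mod-$3$ reduction lies in $N_s(3)$ but not in $B(3)$. For each such $H$ I expect exactly one of three outcomes: (i) $H$ omits some scalar congruent to $1$ modulo $3$, contradicting \cite[Theorem 3.16]{lombTron}; (ii) every orbit of $H$ on the set of cyclic order-$9$ subgroups of $E[9]$ has length at least $4$, forcing $[K:\mathbb{Q}] \geq 4$; or (iii) $H$ is conjugate to a subgroup of $N_s(9)$.

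Outcome (iii) is eliminated exactly as at the end of Proposition \ref{7iso} and Lemma \ref{no5iso}: the $\mathbb{Q}$-isomorphism $X_s(9) \cong X_0^+(81) = X_0^+(3^4)$ together with the CM/cusp compatibility of \cite[Section 2]{biluParReb} reduces the question to whether $X_0^+(81)(\mathbb{Q})$ contains a non-cuspidal non-CM point, and the appropriate analogue of \cite[Theorem 3.14]{momoShim} at $p=3$ rules this out. The main obstacle I anticipate is keeping the \texttt{Magma} enumeration clean and, in particular, ruling out any conjugacy class producing an orbit of length exactly $3$ on cyclic order-$9$ subgroups: such an orbit would only yield $[K:\mathbb{Q}] \geq 3$ and miss the target bound of $4$ by one.
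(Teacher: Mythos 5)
Your proposal follows essentially the same route as the paper: handle the surjective and $N_{ns}(3)$ cases via Lemmas \ref{surjective} and \ref{nonsplit}, then for the $N_s(3)$ case enumerate the subgroups of $\GL_2(\mathbb{Z}/9\mathbb{Z})$ reducing into $N_s(3)$ and dispose of those conjugate into $N_s(9)$ via $X_s(9) \cong X_0^+(3^4)$ and \cite[Theorem 3.14]{momoShim}. The paper's computation confirms your expectations and in fact does slightly better than you feared: of the $12$ relevant conjugacy classes, $8$ have all orbits on cyclic order-$9$ subgroups of length exactly $6$ (so no problematic orbit of length $3$ arises) and the remaining $4$ are conjugate into $N_s(9)$.
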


\begin{proof}
We know from subsection \ref{data3} that if $\rho_{E, 3}$ is surjective or if its image is conjugate to $N_{ns}(3)$, then any $3$-isogeny is defined over the number field of degree $4$. Hence, any cyclic $9$-isogeny is defined over the number field of degree at least $4$. The remaining option is that the image of $\rho_{E, 3}$ is conjugate to $N_s(3)$. In that case, $3$-isogeny is defined over a number field of degree $2$. We analyze this the same way we did for the similar situation with $5$-isogeny and $7$-isogeny. We consider all possible images of $\rho_{E, 9}$. Those are the subgroups of $\GL_2(\mathbb{Z}/9\mathbb{Z})$ that reduce to $N_s(3)$ modulo $3$, up to conjugation. Using \texttt{Magma}, we see that there are $12$ such subgroups: $8$ of them have orbit lengths of cyclic subgroups of $E[9]$ of order $9$ all equal to $6$, so any cyclic $9$-isogeny is defined over the number field of degree $6$ in those cases. The other $4$ are conjugate to a subgroup of $N_s(9)$. We can conclude that these $4$ subgroups can't appear by using \cite[Theorem 3.14]{momoShim} like in the end of the proof of Proposition \ref{7iso} and Lemma \ref{no5iso}. 
\end{proof}

\begin{proposition} \label{3isoFinal}
Let $E/ \mathbb{Q}$ be a non-CM elliptic curve with a cyclic $n$-isogeny defined over a quadratic number field $K$. Assume that $3^k \mid n$. Then $k \leq 2$.
\end{proposition}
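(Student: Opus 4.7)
The plan is to deduce Proposition \ref{3isoFinal} directly from Lemmas \ref{yes3iso} and \ref{no3iso}, exactly paralleling the deduction of Proposition \ref{5isoFinal} from its two preceding lemmas. Concretely, I would split on whether $E$ admits a $3$-isogeny over $\mathbb{Q}$, and in each case compare the quadratic bound $[K:\mathbb{Q}]=2$ with the lower bound supplied by the relevant lemma.

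First, suppose $E$ has a $3$-isogeny defined over $\mathbb{Q}$. Since $K$ has degree $2$ and $E$ inherits a cyclic $3^k$-isogeny over $K$ from the hypothesis $3^k\mid n$, Lemma \ref{yes3iso} gives $2 = [K:\mathbb{Q}] \geq 3^{k-2}$ whenever $k\geq 2$. This forces $3^{k-2} \leq 2$, hence $k-2 \leq 0$, i.e.\ $k \leq 2$, as desired. For $k \leq 1$ there is nothing to prove.

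Second, suppose $E$ has no $3$-isogeny defined over $\mathbb{Q}$. If $k \geq 2$, then $E$ has a cyclic $9$-isogeny over $K$, and Lemma \ref{no3iso} forces $[K:\mathbb{Q}] \geq 4$, contradicting $[K:\mathbb{Q}]=2$. Therefore $k \leq 1$ in this case, which is even stronger than $k \leq 2$.

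Since the two cases are exhaustive, the proposition follows in both. No genuine obstacle is expected: all the representation-theoretic work has been absorbed into Lemmas \ref{yes3iso} and \ref{no3iso}, so the proposition reduces to a two-line numerical comparison. The only thing to be careful about is verifying that the hypothesis ``$E$ has a cyclic $n$-isogeny defined over $K$'' with $3^k\mid n$ indeed implies the existence of a cyclic $3^k$-isogeny defined over $K$, which is immediate by taking the appropriate subgroup of the cyclic $n$-isogeny kernel.
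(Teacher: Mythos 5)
Your proposal is correct and is exactly the argument the paper intends: the paper's proof consists of the single line ``This follows directly from Lemmas \ref{yes3iso} and \ref{no3iso},'' and your case split (rational $3$-isogeny present, apply Lemma \ref{yes3iso} to get $2\geq 3^{k-2}$; otherwise apply Lemma \ref{no3iso} to rule out $k\geq 2$) is precisely the intended deduction. No discrepancy to report.
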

\begin{proof}
This follows directly from Lemmas \ref{yes3iso} and \ref{no3iso}.
\end{proof}

\section{Isogenies of degree $2^k$}

We will now consider isogenies divisible by powers of $2$.

\begin{lemma} \label{2pow}
Let $E/ \mathbb{Q}$ be a non-CM elliptic curve with a cyclic $n$-isogeny defined over a number field $K$ such that $2^k \mid n$ with $k \geq 4$. Then $[K : \mathbb{Q}] \geq 2^{k-4}$. If $K$ is quadratic, then $k \leq 5$.
\end{lemma}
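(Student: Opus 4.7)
The plan mirrors the strategy of Lemmas \ref{yes5iso} and \ref{yes3iso}. The crux is to establish that for every non-CM $E/\mathbb{Q}$ the $2$-adic representation $\rho_{E, 2^\infty}$ is defined modulo $16$. Once this is in hand, Lemma \ref{definedP} applied inductively along the chain
\[
C \supset 2C \supset 4C \supset \cdots \supset 2^{k-4}C
\]
gives $[K(2^j C) : K(2^{j+1} C)] = 2$ for each $j \in \{0, 1, \ldots, k-5\}$, and hence $[K(C) : K(2^{k-4}C)] = 2^{k-4}$. Since $K(2^{k-4}C) \supseteq \mathbb{Q}$, we conclude $[K:\mathbb{Q}] = [K(C):\mathbb{Q}] \geq 2^{k-4}$, proving the first statement. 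The second statement is immediate: if $[K:\mathbb{Q}] = 2$, then $2^{k-4} \leq 2$, so $k \leq 5$.

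To prove that $\rho_{E, 2^\infty}$ is defined modulo $16$, I would appeal to the Rouse--Zureick-Brown classification of $2$-adic images of non-CM elliptic curves over $\mathbb{Q}$ (reference \cite{2AdicZB}). Their work produces a finite list of possible images of $\rho_{E, 2^\infty}$ up to conjugation, with explicit generators; for each listed subgroup $H$ one verifies in \texttt{Magma} that $H$ contains the kernel of the reduction $\GL_2(\mathbb{Z}_2) \twoheadrightarrow \GL_2(\mathbb{Z}/16\mathbb{Z})$, i.e., that the level of $H$ divides $16$. A natural organizing principle, parallel to subsection \ref{data2}, is to split into the sub-cases $\rho_{E,2}$ surjective, $E$ has a rational $2$-isogeny, and $j(E) = h^2 + 1728$, and handle each separately.

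The main potential obstacle is any image in the Rouse--Zureick-Brown list whose level is exactly $32$ rather than a divisor of $16$: for such an image, the uniform "defined modulo $16$" condition fails and Lemma \ref{definedP} cannot be applied at the last step of the chain. In that event I would fall back on a direct orbit analysis, exactly as in Proposition \ref{7iso} and Lemma \ref{no3iso}. Realize each exceptional $H$ as a subgroup of $\GL_2(\mathbb{Z}/32\mathbb{Z})$, enumerate its orbits on cyclic order-$32$ subgroups of $(\mathbb{Z}/32\mathbb{Z})^2$, and verify that every orbit has length at least $2$; combined with the iteration from Lemma \ref{definedP} at the earlier steps (which still goes through since $\rho_{E, 2^\infty}$ is certainly defined modulo $32$), this again yields $[K:\mathbb{Q}] \geq 2^{k-4}$. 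If necessary, \cite[Theorem 3.16]{lombTron} can be used to prune candidate subgroups that cannot arise as images, just as in the earlier $p=7$ and $p=5$ analyses.
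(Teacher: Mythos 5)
Your strategy is sound, but your ``primary route'' rests on a false premise: it is \emph{not} true that $\rho_{E,2^{\infty}}$ is defined modulo $16$ for every non-CM $E/\mathbb{Q}$. The Rouse--Zureick-Brown result (\cite[Corollary 1.3]{2AdicZB}) only guarantees that the image is the full preimage of its reduction modulo $32$, and this is sharp --- their classification contains arithmetically realized images of level exactly $32$. So the case you flag as a ``potential obstacle'' genuinely occurs, and your fallback is not a contingency but the actual proof. That fallback is essentially what the paper does, with one simplification: rather than enumerating the exceptional level-$32$ subgroups and computing orbit lengths on cyclic order-$32$ subgroups, the paper works uniformly with ``defined modulo $32$,'' runs the Lemma \ref{definedP} chain down to a subgroup of order $32$ (yielding a factor $2^{k-5}$), and then invokes the known classification of $\mathbb{Q}$-rational cyclic isogenies (Mazur--Kenku: $32$ is not a possible degree) to conclude that $\mathbb{Q}(2^{k-5}C)\neq\mathbb{Q}$, which supplies the last factor of $2$ and gives $[K:\mathbb{Q}]\geq 2^{k-4}$. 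Your version would also work, at the cost of a group-by-group \texttt{Magma} computation and a case split (level dividing $16$ versus level exactly $32$) that the paper's citation makes unnecessary. The deduction $k\leq 5$ for quadratic $K$ is the same in both.
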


\begin{proof}
Clearly $E$ has a cyclic $2^k$-isogeny defined over $K$. We know from \cite[Corollary 1.3]{2AdicZB} that $\rho_{E, 2^{\infty}}$ is defined modulo $32$. We also know that a cyclic $32$-isogeny can't be defined over $\mathbb{Q}$, so it is defined over at least a quadratic extension of $\mathbb{Q}$. We can now use Lemma \ref{definedP} to conclude that $[K : \mathbb{Q}] \geq 2^{k-4}$. It is now easy to see that if $K$ is a quadratic number field, then $k \leq 5$.
\end{proof}

\section{Isogenies of degree $2^a \cdot 3^b$}

We will now consider isogenies whose degree is of the form $2^a \cdot 3^b$. Clearly, we need to only consider $a, b \geq 1$ since other cases are considered in the above sections.

\begin{lemma} \label{2and3}
Let $E/ \mathbb{Q}$ be a non-CM elliptic curve with a cyclic $n$-isogeny defined over a quadratic number field $K$, where $n = 2^a3^b$. Then $n \in \{2,4,6,8,9,12,16,18,24,32,36\}$.
\end{lemma}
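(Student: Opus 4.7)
Since $K$ is quadratic, Lemma~\ref{2pow} forces $a\le 5$ and Proposition~\ref{3isoFinal} forces $b\le 2$. Under the standing assumption $a,b\ge 1$, the only candidates are
\[
n\in\{6,12,18,24,36,48,72,96,144,288\},
\]
of which the first five are already in the claimed list. A cyclic $n$-isogeny over $K$ restricts to a cyclic $m$-isogeny over $K$ for every divisor $m\mid n$ (the unique cyclic subgroup of the kernel of order $m$ is characteristic, hence $G_K$-stable). Since $48\mid 96$, $72\mid 144$, and both $48,72\mid 288$, it suffices to rule out $n=48$ and $n=72$.

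\textbf{Step 2: eliminating $n\in\{48,72\}$ via $X_0(n)$.} Such an $E$ produces a non-cuspidal quadratic point $P\in X_0(n)(K)$ with $j(P)\in\mathbb{Q}$. The curve $X_0(48)$ is hyperelliptic of genus $3$, so provided that $J_0(48)(\mathbb{Q})$ is finite its quadratic points are described by Bruin--Najman~\cite{bnQcurve}. The curve $X_0(72)$ has genus $5$ and is non-hyperelliptic, so its quadratic points are covered either by {\"O}zman--Siksek~\cite{OzmanSiksek19} (if $\operatorname{rk} J_0(72)(\mathbb{Q})=0$) or by Box~\cite{Box19}. The plan is then to run, on each candidate point, the same Atkin--Lehner argument used in the proof of Lemma~\ref{diffPrimes} for $n\in\{35,65\}$: the equality $j(P^\sigma)=j(P)$ forces $P^\sigma$ to be a modular image of $P$, and in the non-CM case this collapses to $P^\sigma=w_n(P)$; unravelling the identification then produces a cyclic $n$-isogeny $E\to E^\sigma\cong E$, contradicting the non-CM hypothesis.

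\textbf{Main obstacle.} The delicate step is the concrete bookkeeping against the quadratic-point lists: one must check point by point that no non-CM quadratic point on $X_0(48)$ or on $X_0(72)$ has $j$-coordinate in $\mathbb{Q}$ while simultaneously escaping the Atkin--Lehner identification above. If the hyperelliptic (respectively, bielliptic) involution used in \cite{bnQcurve,OzmanSiksek19,Box19} is not literally $w_n$, then the argument must be re-run on the appropriate Atkin--Lehner quotient. These verifications are naturally computational and would be performed in Magma, matching the paper's code repository.
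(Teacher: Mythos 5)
Your proposal is correct and follows essentially the same route as the paper: reduce via the exponent bounds and divisibility of cyclic kernels to the two cases $n\in\{48,72\}$, then invoke the quadratic-point classifications of $X_0(72)$ (Özman--Siksek, Table 8.13) and $X_0(48)$ (Bruin--Najman, Table 15) together with the self-isogeny-implies-CM argument. Your hedge about the involution is the right instinct, and the paper's resolution is slightly different from re-running on a quotient: for $X_0(48)$ the hyperelliptic involution is not $w_{48}$ but is induced by $\beta_{48}=\left(\begin{smallmatrix}-6&1\\-48&6\end{smallmatrix}\right)$, and curves linked by it are $12$-isogenous, which still forces $E$ to be isogenous to itself and hence CM.
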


\begin{proof}
We can use the Proposition \ref{3isoFinal} to conclude that $b \leq 2$.

\textbf{Case $b = 2$:} Clearly, it is enough to show that $n = 72$ is impossible. This follows directly from \cite[Table 8.13.]{OzmanSiksek19}.

\textbf{Case $b = 1$:} Clearly, it is enough to show that $n = 48$ is impossible. One can use \cite[Table 15.]{bnQcurve} to see that the only exceptional quadratic points on $X_0(48)$ are CM points. All non-exceptional quadratic points are paired up via hyperelliptic involution induced by $\beta_{48} =
\begin{pmatrix}
-6&&1 \\
-48&&6 \\
\end{pmatrix}$. We can refer to \cite[Subsection 3.4]{bnQcurve} to conclude that curves linked with this hyperelliptic involution are $12$-isogenous. Therefore, if there was an elliptic curve defined over $\mathbb{Q}$ among the obvious points, it would be $12$-isogenous to its Galois conjugate (itself) since hyperelliptic involution and Galois conjugation act identically on the non-exceptional points. Therefore, it would have CM, which is a contradiction. The proof is now complete.
\end{proof}

\section{Isogenies of degree $2^a \cdot 5^b$ over quadratic fields}

We will now consider isogenies whose degree is of the form $2^a \cdot 5^b$. Clearly, we need to only consider $a, b \geq 1$ since other cases are considered in the above sections.

\begin{lemma} \label{2and5}
Let $E/ \mathbb{Q}$ be a non-CM elliptic curve with a cyclic $n$-isogeny defined over a quadratic number field $K$, where $n = 2^a5^b$. Then $n \in \{2,4,5,8,10,16,20,25,32\}$.
\end{lemma}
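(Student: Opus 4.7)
The approach mirrors the proof of Lemma \ref{2and3}. Proposition \ref{5isoFinal} immediately gives $b \leq 2$, and Lemma \ref{2pow} gives $a \leq 5$; combined with the hypothesis $a,b \geq 1$, the candidate values of $n$ outside the target list $\{2,4,5,8,10,16,20,25,32\}$ are $\{40, 50, 80, 100, 160, 200, 400, 800\}$. Since a cyclic $n$-isogeny over $K$ induces a cyclic $d$-isogeny over $K$ for every $d \mid n$, it is enough to rule out the two minimal forbidden values $n = 40$ (which kills $80$ and $160$) and $n = 50$ (which kills $100$, $200$, $400$, $800$). I would therefore organize the proof into the cases $b = 2$, reducing to $n = 50$, and $b = 1$, reducing to $n = 40$, in direct parallel with the $b=2$/$b=1$ split used in Lemma \ref{2and3}.

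For each of the two remaining values, I would invoke the classification of quadratic points on the corresponding hyperelliptic modular curve: $X_0(50)$ has genus $2$ and $X_0(40)$ has genus $3$, and the quadratic points on both are tabulated in \cite{bnQcurve}. The argument is then exactly the one used for $n = 48$ in the proof of Lemma \ref{2and3}: one checks from the tables that the only exceptional quadratic points correspond to CM elliptic curves, and that all non-exceptional quadratic points on $X_0(40)$ (resp.\ $X_0(50)$) are paired by the hyperelliptic involution, which coincides with an Atkin--Lehner involution $w_d$ for some $d \parallel n$ read off the table. If $Q = (E,C)$ is a non-exceptional quadratic point with $j(E) \in \mathbb{Q}$, then $Q^{\sigma} = w_d(Q)$ represents $(E^{\sigma}, C')$ with $E$ and $E^{\sigma}$ linked by a cyclic $d$-isogeny; but $j(E) = j(E^{\sigma})$, so $E$ would be $d$-isogenous to itself and thus have CM, a contradiction.

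The only real obstacle is the bookkeeping: one must verify from \cite{bnQcurve} precisely which Atkin--Lehner involution realizes the hyperelliptic involution on each of $X_0(40)$ and $X_0(50)$ (and that this $d$ is $\geq 2$, so the self-isogeny is nontrivial), and confirm that no non-exceptional quadratic point in the tables corresponds to a non-CM elliptic curve with rational $j$-invariant. I do not expect any mathematical difficulty beyond what was already handled for $n = 48$; the argument is essentially a transcription of that case to two new levels.
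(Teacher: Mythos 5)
Your proposal is correct and follows essentially the same route as the paper: reduce to $n=40$ and $n=50$ via Proposition \ref{5isoFinal}, then use the quadratic-point tables for $X_0(40)$ and $X_0(50)$ from \cite{bnQcurve} together with the hyperelliptic/Atkin--Lehner self-isogeny argument. The one small correction is that on $X_0(50)$ the exceptional points are not all CM --- four of them are non-CM but have irrational $j$-invariant --- which is precisely the kind of detail your final bookkeeping step would catch.
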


\begin{proof}
We can use the Proposition \ref{5isoFinal} to conclude that $b \leq 2$.

\textbf{Case $b = 2$:} Clearly, it is enough to show that $n = 50$ is impossible. One can use \cite[Table 16.]{bnQcurve} to see that the only exceptional quadratic points on $X_0(50)$ are two CM points and four non-CM points which don't correspond to rational $j$-invariant. Non-exceptional points come in pairs via hyperelliptic involution which is also the Atkin-Lehner involution $w_{50}$. We can again deduce that those points can't correspond to rational non-CM elliptic curves like several times before.

\textbf{Case $b = 1$:} Clearly, it is enough to show that $n = 40$ is impossible. We can use \cite[Table 11]{bnQcurve}. It tells us that all exceptional quadratic points on $X_0(40)$ correspond to CM-curves. The remaining quadratic points are non-exceptional points with a rational $x$ coordinate, so the hyperelliptic involution acts the same way on them as Galois conjugation. The hyperelliptic involution $\iota$ is induced by $\beta_{40} =
\begin{pmatrix}
-10&&1 \\
-120&&10 \\
\end{pmatrix}$. We can refer to \cite[Subsection 3.4]{bnQcurve} to conclude that curves linked with this hyperelliptic involution are $20$-isogenous. Therefore, if there was an elliptic curve defined over $\mathbb{Q}$ among the non-exceptional points, it would be $20$-isogenous to itself. Therefore, it would have CM, which is a contradiction.

\end{proof}

\section{Isogenies of degree $3^a \cdot 5^b$ over quadratic fields}

We will now consider isogenies whose degree is of the form $3^a \cdot 5^b$. Clearly, we need to only consider $a, b \geq 1$ since other cases are considered in the above sections.

\begin{lemma} \label{3and5}
Let $E/ \mathbb{Q}$ be a non-CM elliptic curve with a cyclic $n$-isogeny defined over a quadratic number field $K$, where $n = 3^a5^b$. Then $n \in \{3, 5, 9, 15, 25\}$.
\end{lemma}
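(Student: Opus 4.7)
The plan is to combine the previous prime-power results with a small case analysis of the remaining composite degrees, arguing via quadratic points on the relevant modular curves, in the same style as Lemmas \ref{2and3} and \ref{2and5}.

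First I would apply Proposition \ref{3isoFinal} to obtain $a \leq 2$ and Proposition \ref{5isoFinal} to obtain $b \leq 2$. Combined with the standing assumption $a, b \geq 1$, this leaves only $n \in \{15, 45, 75, 225\}$. The value $n = 15$ is permitted by the statement, and since $45 \mid 225$, a non-existence result for $n = 45$ automatically kills $n = 225$. Thus the real work is to rule out $n = 45$ and $n = 75$.

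For $n = 45$ the curve $X_0(45)$ has genus $3$ and is bielliptic. I would invoke the appropriate description of its quadratic points from \cite{NajmanVuk} (with input from \cite{OzmanSiksek19} or \cite{Box19} depending on which $\mathbb{Q}$-simple factor of $J_0(45)$ is relevant) and proceed exactly as in the $n = 48, 40, 50$ cases treated earlier. Concretely: the exceptional quadratic points are checked to correspond to CM curves, while every non-exceptional quadratic point $P$ is paired with its Galois conjugate $P^{\sigma}$ by an involution $w$ of $X_0(45)$ (hyperelliptic/bielliptic or an Atkin--Lehner involution) whose moduli interpretation sends a pair $(E,C)$ to $(E', C')$ where $E$ and $E'$ are linked by a cyclic isogeny of some degree $d \mid 45$ with $d > 1$. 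If such a $P$ represented an $E/\mathbb{Q}$ with a rational $j$-invariant, then $w(P) = P^{\sigma}$ would represent $(E^{\sigma}, C') \cong (E, C')$ up to twist, forcing $E$ to be $d$-isogenous to itself and therefore to have CM, contradicting the hypothesis.

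For $n = 75$ the curve $X_0(75)$ has genus $5$ and is also bielliptic, and the argument is structurally identical: use the table for $X_0(75)$'s quadratic points, discard the exceptional CM points, and derive a self-isogeny contradiction from the involution pairing the remaining non-exceptional points. The main obstacle in both cases will be making sure the description of quadratic points and the moduli action of the pairing involution are in a form we can cite directly; for $n = 75$ this is slightly heavier because the higher genus and possibly positive-rank factor of $J_0(75)(\mathbb{Q})$ push us into \cite{Box19} or \cite{NajmanVuk} rather than the simpler $\rk J_0(n)(\mathbb{Q}) = 0$ setting of \cite{OzmanSiksek19}. Once these two eliminations are in place, the list collapses to $n \in \{15\}$ among the cases with both $a, b \geq 1$, which together with the pure prime-power cases yields $n \in \{3, 5, 9, 15, 25\}$, completing the proof.
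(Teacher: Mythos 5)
Your reduction is exactly the paper's: Propositions \ref{3isoFinal} and \ref{5isoFinal} give $a,b\le 2$, and since a $225$-isogeny contains a $45$-isogeny, only $n=45$ and $n=75$ need to be ruled out. Where you diverge is in how you expect the elimination to go. Both $J_0(45)(\mathbb{Q})$ and $J_0(75)(\mathbb{Q})$ have rank $0$, so these curves sit squarely in the \"Ozman--Siksek setting of \cite{OzmanSiksek19}: their Tables 8.5 and 8.14 give the \emph{complete, finite} lists of quadratic points, and the paper simply reads off the answer. There is no infinite family of non-exceptional points pulled back from a positive-rank quotient, so the involution-pairing/self-isogeny argument you set up (modelled on the $n=40,48,50$ cases) is unnecessary machinery here, and your worry about a positive-rank factor of $J_0(75)$ pushing you into \cite{Box19} or \cite{NajmanVuk} does not materialize. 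One detail in your plan would also need correcting if carried out literally: for $X_0(45)$ the non-cuspidal quadratic points are \emph{not} all CM --- there are four non-CM quadratic points --- and they are discarded because their $j$-invariants are irrational, not because they have CM. Your framework (inspect the listed points and check their $j$-invariants) would still surface this and reach the same conclusion, so this is a misprediction about the tables rather than a gap in the logic.
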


\begin{proof}
Clearly, it is enough to show that $n = 45$ and $n = 75$ are both impossible. This follows from \cite[Tables 8.5., 8.14.]{OzmanSiksek19}. The curve $X_0(45)$ has two quadratic CM points and four quadratic non-CM points which don't give us a rational $j$-invariant. The curve $X_0(75)$ has no non-cuspidal, non-CM quadratic points.

\end{proof}

\section{Isogenies of degree $14, 30, 63$ over quadratic fields}\label{143063}

\begin{lemma} \label{30and63}
Let $E/ \mathbb{Q}$ be a non-CM elliptic curve with a cyclic $n$-isogeny defined over a quadratic number field $K$. Then $n \not\in \{30, 63\}$.
\end{lemma}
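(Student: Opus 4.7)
The overall approach is to pass from a cyclic $n$-isogeny over a quadratic field to a non-cuspidal quadratic point on $X_0(n)$, and then invoke an existing classification of such quadratic points. This mirrors the proofs already given in the paper for $n \in \{48, 40, 50, 45, 75\}$: once the quadratic points on $X_0(n)$ are in hand, the only real content is showing that none of them correspond to a non-CM $E/\mathbb{Q}$ with rational $j$-invariant.

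For $n = 30$, the modular curve $X_0(30)$ is hyperelliptic of genus $3$ with $J_0(30)(\mathbb{Q})$ finite, so its quadratic points are completely described in \cite{bnQcurve}. I would extract the relevant table and split the non-cuspidal quadratic points into two groups. The exceptional ones form a finite explicit list that I would check case by case, verifying they are either CM points or have $j$-invariant outside $\mathbb{Q}$. The non-exceptional ones come in pairs under the hyperelliptic involution on $X_0(30)$, and by \cite[Subsection 3.4]{bnQcurve} this involution corresponds to an isogeny of some explicit degree $d$ between the two underlying pairs $(E, C)$ and $(E', C')$. If $j(E) \in \mathbb{Q}$ then $E \cong E^{\sigma}$ over $K$, and since the hyperelliptic involution acts on such a point as Galois conjugation, $E$ acquires a self-isogeny of degree $d$, forcing CM. This is the same contradiction used for $n = 48$ and $n = 40$.

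For $n = 63$, the modular curve $X_0(63)$ has genus $5$, is non-hyperelliptic, and is bielliptic. Its quadratic points are classified by \cite{Box19} or by \cite{NajmanVuk}, depending on the rank of $J_0(63)(\mathbb{Q})$. I would pull the list of quadratic points from the relevant table, handle the exceptional points individually (again, these will be CM or will not have $j \in \mathbb{Q}$), and observe that the remaining non-exceptional points are pullbacks under the bielliptic quotient map to an elliptic quotient. The bielliptic involution pairing them identifies the two underlying $(E, C)$-pairs by an isogeny of some concrete degree dividing $63$, and the same self-isogeny argument then eliminates any candidate with $j(E) \in \mathbb{Q}$.

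The main obstacle in both cases is purely bookkeeping: identifying which involution on $X_0(n)$ glues Galois-conjugate non-exceptional quadratic points, and computing the isogeny degree it induces on the underlying elliptic curves, via \cite[Subsection 3.4]{bnQcurve} and the analogous content in \cite{NajmanVuk} or \cite{Box19}. Once the involution is pinned down and the induced degree is seen to be strictly greater than $1$, the non-CM assumption produces an immediate contradiction, finishing both $n = 30$ and $n = 63$.
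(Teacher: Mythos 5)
Your proposal is correct and, for $n=30$, is essentially identical to the paper's proof: cite the table of quadratic points on the hyperelliptic curve $X_0(30)$ from \cite{bnQcurve}, check that the six exceptional points are CM or have irrational $j$-invariant, and dispose of the non-exceptional points by noting that the hyperelliptic involution (which is $w_{15}$) pairs each such point with its Galois conjugate, so a rational $j$-invariant would force a $15$-isogeny from $E$ to itself and hence CM.

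For $n=63$ your route differs slightly and is more elaborate than necessary. The curve $X_0(63)$ is non-hyperelliptic of genus $5$ with $\rk(J_0(63)(\mathbb{Q}))=0$, so it falls under the Özman--Siksek classification rather than Box or Najman--Vukorepa; the paper simply reads off from \cite[Table 8.11.]{OzmanSiksek19} that $X_0(63)$ has no non-cuspidal, non-CM quadratic points whatsoever. Because the Jacobian has rank $0$ there is no infinite family of non-exceptional pullback points, so the bielliptic-quotient and self-isogeny machinery you set up is not needed here; every quadratic point is already in the finite list and is a cusp or a CM point. Your approach would still reach the same conclusion, but at the cost of identifying the correct bielliptic involution and verifying it is modular, none of which is required once the right reference is invoked.
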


\begin{proof}
To eliminate the option $n = 30$, we can use \cite[Table 6.]{bnQcurve} to see that $X_0(30)$ has six exceptional quadratic points. Two of them are CM points and four are non-CM points which don't correspond to a rational $j$-invariant. The remaining quadratic points are non-exceptional points which come in pairs via hyperelliptic involution $w_{15}$ and we can use the same argument as before to show that they can't correspond to a rational non-CM $j$-invariant.

To eliminate the option $n = 63$, we can use \cite[Table 8.11.]{OzmanSiksek19} to see that $X_0(63)$ has no non-CM non-cuspidal quadratic points.

\end{proof}

\begin{lemma} \label{deg14}
Let $E/ \mathbb{Q}$ be a non-CM elliptic curve with a cyclic $n$-isogeny defined over a quadratic number field $K$. Then $n \neq 14$.
\end{lemma}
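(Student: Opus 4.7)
The plan is to use the requirement that $E$ admits both a cyclic $2$-isogeny and a cyclic $7$-isogeny defined over $K$. By Subsection \ref{data7}, since $[K:\mathbb{Q}]\le 2$, the $7$-isogeny being $K$-rational forces one of two alternatives: either $E$ has a $7$-isogeny over $\mathbb{Q}$, so $j(E)=\frac{(t^2+13t+49)(t^2+5t+1)^3}{t}$ for some $t\in\mathbb{Q}$; or the image of $\rho_{E,7}$ is conjugate to a subgroup of $N_s(7)$. (The surjective case and the $N_{ns}(7)$ case would both put the $7$-isogeny over a field of degree at least $8$.) Similarly, Subsection \ref{data2} forces the $2$-isogeny to be either rational, or $j(E)=h^2+1728$ for some $h\in\mathbb{Q}$, since a surjective mod-$2$ image would put the $2$-isogeny over a cubic field.

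This yields four combined subcases. The case ``rational $7$-isogeny + rational $2$-isogeny'' would give a rational cyclic $14$-isogeny, impossible by Mazur--Kenku. For ``rational $7$-isogeny + $j(E)=h^2+1728$'', I would match the two $j$-formulas
\[
\frac{(t^2+13t+49)(t^2+5t+1)^3}{t} = h^2 + 1728,
\]
and after clearing the denominator obtain a hyperelliptic curve of expected genus $4$ over $\mathbb{Q}$. Using \texttt{Magma}, I would compute the Mordell--Weil rank of its Jacobian and, if rank-zero Chabauty applies, enumerate all rational points, verifying that none produces a non-CM $E$. The two remaining subcases involve $\rho_{E,7}\subseteq N_s(7)$: here the $j$-invariants of such $E$ are either a single specific value ($j=2268945/128$), which one checks directly, or a one-parameter family coming from rational points on the genus-$0$ modular curve $X_{s^+}(7)$ as described in \cite[Theorem 1.5]{reprZywina}. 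In the latter case, intersecting this family with either the rational $2$-isogeny parametrization or with the condition $j=h^2+1728$ again yields an explicit curve whose rational points can be enumerated.

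The main obstacle will be the case ``rational $7$-isogeny + $j(E)=h^2+1728$'': the genus-$4$ hyperelliptic curve arising there need not have a Jacobian of rank zero over $\mathbb{Q}$, and may require either a careful higher-rank Chabauty computation or the identification of a lower-genus quotient (for instance via an involution coming from the symmetry of $t\mapsto \tilde t$ on one of the factors) before rational points can be pinned down. A secondary technical point is verifying in the $N_s(7)$ cases that the quadratic field of definition of the $7$-isogeny actually coincides with that of the $2$-isogeny, which is required if a cyclic $14$-isogeny is to be defined over a single quadratic extension $K$.
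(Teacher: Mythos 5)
Your overall strategy---reduce to the coexistence of a $2$-isogeny and a $7$-isogeny over $K$, split according to the possible mod $2$ and mod $7$ images, and intersect the resulting $j$-invariant parametrizations---is the same as the paper's, and your subcase ``rational $2$-isogeny together with mod $7$ image contained in $N_s(7)$'' is exactly the one the paper actually resolves (matching the degree-$24$ split-Cartan parametrization with $j=(s+16)^3/s$ yields a genus $3$ curve whose five rational points are found via an elliptic quotient with six rational points, none giving a non-CM curve). However, your case division contains a conceptual error that manufactures a phantom ``main obstacle.'' If $E$ has no $2$-isogeny defined over $\mathbb{Q}$, then the image of $\rho_{E,2}$ in $\GL_2(\mathbb{Z}/2\mathbb{Z})\cong S_3$ fixes none of the three order-$2$ subgroups, hence contains a $3$-cycle, and so \emph{every} $2$-isogeny of $E$ is defined over a cubic field. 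This covers both the surjective case and the case $j(E)=h^2+1728$ (where the image is the cyclic group of order $3$); neither is compatible with a cyclic $14$-isogeny over a quadratic field. Consequently both subcases you combine with ``$j(E)=h^2+1728$'' are vacuous, and in particular the genus-$4$ hyperelliptic curve you single out as the main difficulty---for which you concede you may need a higher-rank Chabauty argument with no guarantee of success---never needs to be studied. As written, your proof is contingent on completing that computation, so it does not close; once you observe the cubic-field obstruction, the argument collapses to the single genuine case above, which is precisely the paper's proof.

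Two smaller points. First, ``rational $7$-isogeny plus rational $2$-isogeny'' is not ``impossible by Mazur--Kenku'': $X_0(14)(\mathbb{Q})$ does have non-cuspidal points, but they are all CM, so this case is excluded only by the non-CM hypothesis. Second, your worry about whether the quadratic field of definition of the $7$-isogeny coincides with that of the $2$-isogeny evaporates once the $2$-isogeny is known to be defined over $\mathbb{Q}$, which it must be.
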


\begin{proof}
Notice that $E$ can have a rational $14$-isogeny, but then $E$ has to be CM. If $E$ didn't have a rational $2$-isogeny, then any $2$-isogeny would be defined over a number field of degree $3$, making it impossible for $E$ to have a $14$-isogeny defined over quadratic number field. Hence, $E$ has a rational $2$-isogeny. This means that $E$ must have a $7$-isogeny defined over a quadratic number field, but not over $\mathbb{Q}$. By recalling subsection \ref{data7}, we see that the image of $\rho_{E, 7}$ has to be a subgroup of $N_s(7)$. We can get the form for $j$-invariant of such curves from \cite[Theorem 1.5.]{reprZywina}. We match that form with the form of the $j$-invariants allowing a rational $2$-isogeny:
\[
\frac{t(t+1)^3(t^2-5t+1)^3(t^2-5t+8)^3(t^4-5t^3+8t^2-7t+7)^3}{(t^3-4t^2+3t+1)^7} = \frac{(s+16)^3}{s}.
\]
We get a genus $3$ projective curve on which we want to find all the rational points. We map it to a curve which has a quotient that is an elliptic curve with only $6$ rational points. By taking the preimages, we find all the rational points on the starting curve, none of which give us a desired non-CM curve $E$. Those points are: $(2 : -256 : 1), (-1 : -16 : 1), (0 : -16 : 1), (0 : 1 : 0), (1 : 0 : 0)$. The last two don't give us a $j$-invariant and other give us $j$-invariants $0$ or $54000$. That completes the proof.

\end{proof}

\section{Isogenies of degree $91$ over quadratic fields}
\label{91iso}
\subsection{Method description}

Our goal is to show that there are no elliptic curves $E/\mathbb{Q}$ with a cyclic $91$-isogeny defined over a quadratic extension of $\mathbb{Q}$. We will determine all quadratic points on $X_0(91)$ up to those points that appear as pullbacks of rational points on $X_0(91)^+$ (non-exceptional points). We will see that all the exceptional points are either cusps or CM points. On the other hand, we can use the identical modular interpretation argument as several times before to show that if a non-exceptional point on $X_0(91)$ represents an $E$ with a rational $j$-invariant, then $E$ is $91$-isogenous to itself so it has CM.

We will use the relative symmetric Chabauty developed by Siksek in \cite{siksekCh} and used by Box in \cite{Box19}. We will follow the approach and the implementation of Box and easily adapt it to $X_0(91)$. Notice that our problem for $X_0(91)$ is the same problem Box tackled.

For some smooth, projective non-hyperelliptic curve $X/\mathbb{Q}$, the method provides a criterion \cite[Theorem 2.1.]{Box19} for a point on $X^{(2)}(\mathbb{Q})$ to be the only point in its residue class modulo prime $p > 2$. Also, the method provides a criterion \cite[Theorem 2.4.]{Box19} for a point on $X^{(2)}(\mathbb{Q})$ to be the only point in its residue class modulo prime $p > 2$, up to points appearing as pullbacks of points on $C(\mathbb{Q})$, where $C$ is a degree $2$ quotient of $X$.

In our case, we have $X = X_0(91)$ and $C = X_0(91)^+$. We also need $\rk(J(X)) = \rk(J(C))$, which is true in our case as both ranks are $2$. With this, we can easily find a subgroup $G \leq J_0(91)(\mathbb{Q})$ such that $2 \cdot J_0(91)(\mathbb{Q}) \leq G$, see \cite[Proprosition 3.1.]{Box19}. Also, the equality of ranks gives us an easy way of finding annihilating differentials, see \cite[Lemma 3.4.]{Box19}. For a more detailed description of this method, see \cite[Section 6]{NajmanVuk}. For the full description, see \cite{Box19} or \cite{Box21}.

\subsection{Computing the model, rank and torsion}
To get the model of $X_0(91)$, we use the approach of Özman and Siksek \cite[Section 3]{OzmanSiksek19}, but we use a different basis for the space $S_2(91)$ of weight $2$ cuspforms of level $91$. We choose a basis for $S_2(91)$ such that the matrix of $w_{91}$ is diagonal with all the diagonal elements equal to $1$ or $-1$ in that basis. This reduces the time needed to obtain the model and reduces the time needed to compute $C = X_0(91)^+$. We remove the part in the Özman-Siksek code which computes a Gröbner basis since it is only used to potentially simplify the equations and didn't seem to give us noticeable gains. With these minor adjustments, we were able to obtain models for $X_0(n)$ and the quotients $X_0(n)/w_d$ relatively quickly. To get the rank of $J_0(91)(\mathbb{Q})$, we can use the modular symbols package in \texttt{Magma} developed by W. Stein in \cite{stein00, stein07} and also the Kolyvagin-Logachev theorem \cite{KolyvaginLogachev89} identically as in \cite[Proposition 5.1.]{NajmanVuk} to get that $\rk(J_0(91)) = 2$.

To determine $J_0(91)(\mathbb{Q})_{tors}$, we will use the fact that $J_0(91)(\Q)_{tors}$ injects into $J_0(91)(\F_p)$ for an odd prime $p$ of good reduction \cite[Appendix]{katz81}. By doing this for primes $3$, $5$ and $19$, we get that $\#J_0(91)(\mathbb{Q})_{tors} \leq 336$. By taking the differences of cusps of $X_0(91)$, we are able to generate a torsion subgroup isomorphic to $\mathbb{Z}/2\mathbb{Z} \times \mathbb{Z}/168\mathbb{Z}$. Hence, $J_0(91)(\mathbb{Q})_{tors} \cong \mathbb{Z}/2\mathbb{Z} \times \mathbb{Z}/168\mathbb{Z}$.

\subsection{Computations on quotient}
We can easily compute the degree $2$ quotient $X_0(91)^+$ in \texttt{Magma} which is a genus $2$ hyperelliptic curve. We can use Stoll's algorithm \cite{stoll02} to determine the generators of the free part of $J_0(91)^+(\mathbb{Q})$ which also has rank $2$. By taking their pullbacks, we are able to generate a subgroup $G \leq J_0(91)(\mathbb(Q))$ such that $2 \cdot J_0(91)(\mathbb{Q}) \leq G$.

\subsection{Finding some quadratic points}
$X_0(91)$ has $4$ cusps which are defined over $\mathbb{Q}$. We can get $8$ more Galois-conjugate pairs of quadratic points on $X_0(91)$ by taking pullbacks of the $10$ known rational points on $X_0(91)^+$. Although we conjecture that there are no other rational points on $X_0(91)^+$, we will not prove that here. We can get one more pair of Galois-conjugate quadratic points by examining the fixed points of $w_{91}$. Two of them give us that pair. Notice that those points are CM points as they represent elliptic curves $91$-isogenous to themselves.

\subsection{Chabauty computations and finishing}
With all that information, we are able to replicate the same method used by Box in \cite{Box19} and to show that there are no other quadratic points on $X_0(91)$ apart from the known ones and the pullbacks of rational points on $X_0(91)^+$. All the exceptional (non-pullback) points on $X_0(91)$ are the four cusps and a pair of conjugate CM points. The remaining, non-exceptional quadratic points, are pullbacks of rational points on $X_0(91)^+$. Hence, $w_{91}$ acts the same way on them as Galois conjugation. If some non-exceptional point represents an $E$ with a rational $j$-invariant, we see that it is $91$-isogenous to itself since $j(E) = j(E^{\sigma})$ in that case. Therefore, a non-CM $E/\mathbb{Q}$ can't have a cyclic $91$-isogeny defined over a quadratic field. Also notice that all the rational points on $X_0(91)^+$ have been determined in \cite[Example 7.1.]{bal21}, so we are also able to get all the quadratic points on $X_0(91)$.

\subsection{Model and data for $X_0(91)$}
Model for $X_0(91)$:{\tiny
\begin{align*} 
&x_0^2 - 12x_1x_2 + 4x_1x_4 - 14x_2^2 + 12x_2x_3 + 24x_2x_4 - 14x_3^2 + 16x_3x_4 - 23x_4^2 - x_5^2 - 4x_6^2 = 0,\\
&x_0x_1 - 6x_1x_2 + 6x_1x_4 - 3x_2^2 + 2x_2x_3 + 7x_2x_4 - 5x_3^2 + 8x_3x_4 - 7x_4^2 - x_5x_6 - x_6^2 = 0,\\
&x_0x_2 - 2x_1x_2 + x_1x_4 - 3x_2^2 + 6x_2x_3 + 4x_2x_4 - 5x_3^2 + 4x_3x_4 - 3x_4^2 - x_6^2 = 0,\\
&x_0x_3 - x_1x_2 + x_1x_4 + 2x_2x_3 - x_2x_4 - x_3^2 + x_3x_4 + x_4^2 = 0,\\
&x_0x_4 - x_2^2 + 2x_2x_3 - x_3^2 + 2x_4^2 = 0,\\
&x_0x_6 - x_1x_5 + x_2x_5 + x_4x_6 = 0,\\
&x_1^2 - 2x_1x_2 - 3x_2^2 + 4x_2x_3 + 4x_2x_4 - 4x_3^2 + 4x_3x_4 - 4x_4^2 - x_6^2 = 0,\\
&x_1x_3 - x_1x_4 - x_2^2 + x_2x_3 + x_2x_4 - x_3x_4 = 0,\\
&x_1x_6 - x_2x_5 + x_3x_5 = 0,\\
&x_2x_6 - x_3x_5 + x_4x_5 - x_4x_6 = 0.\\
\end{align*}
}
Genus of $X_0(91)$: $7$.

Atkin-Lehner: $w_{91}(X_0 : X_1 : X_2 : X_3 : X_4 : X_5 : X_6) = (X_0 : X_1 : X_2 : X_3 : X_4 : -X_5 : -X_6)$.

Cusps:
$(1 : 0 : 0 : 0 : 0 : 1 : 0),
(-1 : 0 : 0 : 0 : 0 : 1 : 0),
(2 : 0 : -1 : -1 : -1 : 1 : 1),
(-2 : 0 : 1 : 1 : 1 : 1 : 1)$.

$C = X_0(91)^+$: hyperelliptic curve $y^2 = x^6 + 2x^5 - x^4 - 8x^3 - x^2 + 2x + 1$.

Group structure of $J_0(91)(\mathbb{Q})$: $J_0(91)(\mathbb{Q}) \simeq  \mathbb{Z} \oplus \mathbb{Z} \oplus \mathbb{Z}/2\mathbb{Z} \oplus \mathbb{Z}/168\mathbb{Z}$.

The only quadratic points on $X_0(91)$ are cusps (which are all defined over $\mathbb{Q}$), pullbacks of rational points on $X_0(91)^+$ and a pair of CM points $P, P^{\sigma}$ fixed by $w_{91}$, where:
\[
P = \Big(\frac{-8\alpha + 7}{5} : \frac{3\alpha - 7}{5} : \frac{-\alpha + 9}{5} : \alpha : 1 : 0 : 0\Big), \quad \alpha = \frac{17 + 5\sqrt{13}}{18}.
\]

\begin{acknowledgments}
The author gratefully acknowledges support
from the QuantiXLie Center of Excellence, a project co-financed by the Croatian Government and European Union through the
European Regional Development Fund - the Competitiveness and Cohesion Operational Programme (Grant KK.01.1.1.01.0004) and
by the Croatian Science Foundation under the project no. IP-2018-01-1313.
\end{acknowledgments}

\nocite{*}
\bibliographystyle{babplain-fl}
\bibliography{IsogeniesQuadratic}

\end{document}